\newtheorem{theorem}{Theorem}[section]
\newtheorem{lemma}[theorem]{Lemma}
\newtheorem{corollary}[theorem]{Corollary}
\theoremstyle{definition}
\newtheorem{definition}[theorem]{Definition}
\newtheorem{example}[theorem]{Example}
\newtheorem{remark}[theorem]{Remark}
\DeclareMathOperator{\dimH}{dim_H}
\DeclareMathOperator{\dimb}{dim_B}
\DeclareMathOperator{\dimp}{dim_p}
\DeclareMathOperator{\udimb}{\overline{dim}_B}
\DeclareMathOperator{\Per}{Per}
\DeclareMathOperator{\sgn}{sgn}
\numberwithin{equation}{section}
\newcommand{\ba}{{\bf a}}
\newcommand{\bg}{{\bf g}}
\newcommand{\bi}{{\bf i}}
\newcommand{\bj}{{\bf j}}
\newcommand{\bk}{{\bf k}}
\newcommand{\bv}{{\bf v}}
\newcommand{\bw}{{\bf w}}
\newcommand{\bu}{{\bf u}}
\newcommand{\bz}{{\bf z}}
\begin{document}

\title[Falconer-Sloan condition]{Random affine code tree fractals and 
Falconer-Sloan condition}

\author[E. J\"arvenp\"a\"a]{Esa J\"arvenp\"a\"a}
\address{Department of Mathematical Sciences, P.O. Box 3000,
  90014 University of Oulu, Finland}
\email{esa.jarvenpaa@oulu.fi}

\author[M. J\"arvenp\"a\"a]{Maarit J\"arvenp\"a\"a}
\address{Department of Mathematical Sciences, P.O. Box 3000,
  90014 University of Oulu, Finland}
\email{maarit.jarvenpaa@oulu.fi}

\author[B. Li]{Bing Li}
\address{Department of Mathematics, South China University of Technology,
Guangzhou, 510641, P.R. China}
\address{Department of Mathematical Sciences, P.O. Box 3000,
  90014 University of Oulu, Finland}
\email{libing0826@gmail.com, the corresponding author}

\author[\"O. Stenflo]{\"Orjan Stenflo}
\address{Department of Mathematics, Uppsala University, P.O. Box 480,
  75106 Uppsala, Sweden}
\email{stenflo@math.uu.se}

\thanks{We thank the referee for useful comments and we acknowledge the support 
of Academy of Finland, the Centre of
Excellence in Analysis and Dynamics Research. BL is partially supported by
a NSFC grant 11201155. \"OS thanks the Esseen foundation.}
\subjclass[2010]{37C45, 28A80, 15A45}
\keywords{Falconer-Sloan condition, quasimultiplicativity, Hausdorff dimension,
self-affine sets}

\begin{abstract}
We calculate the almost sure dimension for a general class of random affine 
code tree fractals in $\mathbb R^d$. The result is based on a probabilistic
version of the Falconer-Sloan condition $C(s)$ introduced in
\cite{FS}. We verify that, in general, systems having a small number of maps
do not satisfy condition $C(s)$. However, there exists a natural number $n$
such that for typical systems the family of all iterates up to level $n$
satisfies condition $C(s)$. 
\end{abstract}

\maketitle

\section{Introduction}\label{intro}

In the investigation of dimensional properties of self-similar and
self-conformal sets an important tool is the thermodynamic formalism.
There is a natural way to attach a pressure function to a self-similar or
self-conformal iterated function system and, for example, the Hausdorff
dimension and multifractal spectrum can be calculated using the pressure.
Since the pressure is defined by an additive potential
function, there are many tools available for the purpose of analysing it.

In his famous theorem from 1988 Falconer \cite{F88} proved that
the dimension of any typical self-affine set is equal to the unique zero of the
pressure function under the assumption that the norms of the linear parts
are less than 1/3. Later Solomyak \cite{S} verified that 1/3 can be replaced
by 1/2 which is the best possible bound, see \cite{PU}. The potential is
defined by means of the singular value functions of the
iterates of the linear parts, and contrary to the self-conformal setting,
the potential $\phi$ is not additive. In the self-affine case $\phi$
is subadditive guaranteeing the existence of the pressure and
its unique zero. However, $\phi$ is not superadditive -- not even in the weak
sense that $\phi(n+m)\ge\phi(n)+\phi(m)-C$ for some constant $C$.
In many cases this causes severe problems, see for example \cite{BF},
\cite{FM07}, \cite {FS}, \cite{Fe}, \cite {FeSh}, \cite{JJKKSS} and \cite{KS}.

There are various ways to introduce randomness to the self-affine setting.
In \cite{JPS} Jordan, Pollicott and Simon considered a fixed affine
iterated function system  with a small random perturbation in translations at
each step of the construction. When investigating random subsets of
self-affine attractors, Falconer and Miao \cite{FM09} selected at each step
of the construction a random subfamily of the original function system
independently. Both in \cite{JPS} and \cite{FM09} there is total independence
both in space, that is, between different nodes at a fixed construction level,
and in scale, meaning that once a node is chosen its descendants are
chosen independently of the previous history. Such systems are called 
statistically self-affine, since the law controlling the construction is the
same at every node. However, typical realisations are not self-affine.
Inspired by the random $V$-variable fractals introduced by Barnsley, Hutchinson
and Stenflo in \cite{BHS2005}, a new class of random self-affine code tree 
fractals was proposed in \cite{JJKKSS}. In this class typical realisations 
mimic the self-affinity of deterministic iterated function systems. 
Moreover,
the probability distributions have certain independence only in
scale,  and therefore, typical realisations are locally random 
but globally nearly homogeneous. In particular, the attractor is a finite union
of self-affine copies of sets with arbitrarily small diameter. Thus
typical realisations are close to deterministic self-affine 
sets.  In a code tree fractal the linear parts of
the iterated function system may depend on the construction step. 
For example, attractors of graph directed Markov
systems generated by affine maps \cite{F}, or more generally sub-self-affine
sets \cite{F95}, are code tree fractals.

In this paper we generalise the dimension
results in \cite{JJKKSS} concerning random affine code tree fractals.
In \cite{JJKKSS} the existence of the pressure was proven under quite general
conditions (see Theorem~\ref{pexists}). However, when verifying the relation
between the dimension and the zero of the pressure several additional
assumptions were needed -- the most restrictive one being that $d=2$. The main
cause for the extra assumptions was the non-superadditivity of the potential
defining the pressure.
In the self-affine setting various approaches have been introduced
to overcome the problems caused by the non-superadditivity of the potential.
These include the cone condition \cite{BF}, \cite{FM07}, \cite{FeSh},
\cite{KS}, irreducibility \cite{Fe} and non-existence of parallelly
mapped vectors \cite{JJKKSS}. In this paper we focus on a general condition
(see Definition~\ref{FSdef}) introduced recently by Falconer and Sloan
\cite{FS}. Under the Falconer-Sloan condition (for brevity, F-S condition) 
higher dimensional spaces can also be
considered, see Theorem~\ref{maintheorem}. The only additional assumption
compared to Theorem~\ref{pexists} is that some iterates of the system satisfy
the F-S condition with positive probability. 

The F-S condition is related to a family of linear maps on 
$\mathbb R^d$. The condition is open in the sense that the set of families of
linear maps satisfying it is open in any natural topology.
In this paper we also address a problem proposed by Falconer concerning the
genericity of the F-S condition. In $\mathbb R^2$ the F-S condition 
is easy to check but in higher dimensional spaces the question is more delicate.
It turns out that a family of linear maps $\{S_i\}_{i=1}^k$ on $\mathbb R^d$
does not satisfy the F-S condition unless $k$ is sufficiently large
(see Remark~\ref{FSremark}.(b)) -- the minimal value of $k$ being much larger
than $d$. However, in Corollary~\ref{typical} we prove that there
exists a natural number $n$ depending only on $d$ such that for any generic
family $\{S_i\}_{i=1}^k$ the family
$\{S_{i_1}\circ\dots\circ S_{i_l}\mid i_j\in\{1,\dots,k\}\text{ for }
  j=1,\dots,l\text{ and } 1\le l\le n\}$
satisfies the F-S condition. The set is generic
both in the topological sense, that is, it is open and dense, and in the
measure theoretic sense meaning that it has full Lebesgue measure.
Theorem~\ref{CsCm} provides an explicit criterion guaranteeing that a family
$\{S_i\}_{i=1}^k$ belongs to the generic set. In Remark~\ref{suffcon} we explain
why the complement of this generic set is non-empty, that is, why 
Corollary~\ref{typical} is not valid for all families. 

In many problems related to self-affine iterated function systems it is
sufficient to study iterates of the maps. This is also the case in 
Theorem~\ref{maintheorem}. The applicability of the
F-S condition is based on the fact that the upper bound $n$ for the
number of iterates needed in order that the family
$\{S_{i_1}\circ\dots\circ S_{i_l}\mid i_j\in\{1,\dots,k\}\text{ for }
  j=1,\dots,l\text{ and } 1\le l\le n\}$
satisfies the F-S condition is a constant depending only on the
dimension of the ambient space. In particular, Corollary~\ref{typical} implies
that typical systems satisfy the assumptions of Theorem~\ref{maintheorem}.

The paper is organised as follows. In Section \ref{FS} we recall the
Falconer-Sloan setting and prove that the F-S condition is valid for a family
of iterates of a generic family (Corollary~\ref{typical}). Moreover, we give an
explicit criterion implying that a family belongs to this generic set
(Theorem~\ref{CsCm}). In Section~\ref{codetree} we recall the notation from
\cite{JJKKSS} concerning random affine code tree fractals and prove
that the dimension of a typical affine code tree fractal is given by the zero
of the pressure (Theorem~\ref{maintheorem}).

\section{Falconer-Sloan condition $C(s)$}\label{FS}

In this section we consider the genericity of the F-S condition
introduced in \cite{FS} for the purpose of overcoming problems caused
by the fact that in the self-affine setting the natural potential defining the 
pressure (for definition see \eqref{pressure}) is not supermultiplicative.
Intuitively, the reason behind the applicability of the F-S
condition is as follows: Letting $A$ and $B$ be $d\times d$-matrices, the norm
$\Vert AB\Vert$ may be much smaller than $\Vert A\Vert\cdot\Vert B\Vert$.
This happens if the vector $v$ which determines the norm of $B$ is mapped by
$B$ onto an eigenspace of $A$ which corresponds to some small eigenvalue of
$A$. In the expression of the pressure (for $s=1$) there is a sum 
of 
terms
of the form $\Vert A B\Vert$. The F-S condition guarantees that  
$\Vert A B\Vert$ is not much less than $\Vert A\Vert\cdot\Vert B\Vert$ 
simultaneously for all pairs $(A,B)$.

We begin by recalling the notion from \cite{FS}. For all $m\in\mathbb N$ with
$0\le m\le d$ we denote by $\Lambda^m$ the $m$-th exterior power of
$\mathbb R^d$ with the convention $\Lambda^0=\mathbb R$. An
$m$-vector $\bv\in\Lambda^m$ is {\it decomposable} if it can be written as
$\bv=v_1\wedge\dots\wedge v_m$ for some $v_1,\dots,v_m\in\mathbb R^d$. Let
$\Lambda_0^m$ be the set of decomposable $m$-vectors. If $\{e_1,\dots,e_d\}$
is a basis of $\mathbb R^d$, then
$\{e_{i_1}\wedge\dots\wedge e_{i_m}\mid 1\le i_1<\dots<i_m\le d\}$ is a basis of
$\Lambda^m$. Supposing that
$\{e_1,\dots,e_d\}$ is an orthonormal basis of $\mathbb R^d$, 
{\it the Hodge star operator} $*:\Lambda^m\to\Lambda^{d-m}$ is defined as the 
linear map satisfying
\[
*(e_{i_1}\wedge\dots\wedge e_{i_m})=e_{j_1}\wedge\dots\wedge e_{j_{d-m}}
\]
for all $1\le i_1<\dots<i_m\le d$, where $1\le j_1<\dots<j_{d-m}\le d$ satisfy
$\{i_1,\dots,i_m\}\cup\{j_1,\dots,j_{d-m}\}=\{1,\dots,d\}$. Let
$\omega=e_1\wedge\dots\wedge e_d$ be the normalised volume form on
$\mathbb R^d$. Recall that $\Lambda^d$ is one dimensional. We define the inner
product $\langle\cdot\mid\cdot\rangle$ on $\Lambda^m$ by the (implicit) formula
\[
\langle\bv\mid\bw\rangle\omega=\bv\wedge*\bw.
\]
Then the inner product is independent of the choice of the 
orthonormal basis 
$\{e_1,\dots,e_d\}$, and moreover,
$\{e_{i_1}\wedge\dots\wedge e_{i_m}\mid 1\le i_1<\dots<i_m\le d\}$
becomes an orthonormal basis of $\Lambda^m$. Any linear map
$S:\mathbb R^d\to\mathbb R^d$ induces a linear map $S:\Lambda^m\to\Lambda^m$
such that $S(v_1\wedge\dots\wedge v_m)=Sv_1\wedge\dots\wedge Sv_m$ for all
$v_1\wedge\dots\wedge v_m\in\Lambda_0^m$.

Now we are ready to recall the definition of the condition $C(s)$ from
\cite{FS} -- first for integer parameters and after that for non-integral
parameters $s$.

\begin{definition}\label{FSdef}
Consider a family $\{S_i:\mathbb R^d\to\mathbb R^d\}_{i\in I}$ consisting of
linear maps. Let $m\in\mathbb N$ with $0\le m\le d$. The family
$\{S_i\}_{i\in I}$ satisfies {\it condition $C(m)$} if for all
$0\ne\bv,\bw\in\Lambda_0^m$ there is $i\in I$ such that
$\langle S_i\bv\mid\bw\rangle\ne 0$. Let $0<s<d$ be non-integral and
let $m$ be the integer part of $s$. The family $\{S_i\}_{i\in I}$ satisfies
{\it condition $C(s)$} if for all
$0\ne\bv,\bw\in\Lambda_0^m$ and $0\ne\bv\wedge v,\bw\wedge w\in\Lambda_0^{m+1}$
there is $i\in I$ such that
$\langle S_i\bv\mid\bw\rangle\ne 0$ and
$\langle S_i(\bv\wedge v)\mid\bw\wedge w\rangle\ne 0$.
\end{definition}

\begin{remark}\label{FSremark}
(a) The family $\{S_i\}_{i\in I}$ satisfies condition $C(m)$ if and only if for
all $0\ne\bv\in\Lambda_0^m$ the set $\{S_i\bv\mid i\in I\}$ spans $\Lambda^m$.
Here the if-part is clear whereas the only if -part involves a slight subtilty.
Indeed, Definition \ref{FSdef} deals with decomposable vectors and
$\Lambda_0^m$ is not a vector space when $m\notin\{0,1,d-1,d\}$.
For the only if -part, assume that there exists $0\ne\bv\in\Lambda_0^m$
such that the set $\{S_i\bv\mid i\in I\}$ does not span $\Lambda^m$. Letting
$k$ be the maximal number of linearly independent vectors in
$\{S_i\bv\mid i\in I\}$, we have $k<\binom dm=\dim\Lambda^m$. Denote these
vectors by $\bw^1,\dots,\bw^k$ and consider $i=1,\dots,k$.
Now $\bu=u_1\wedge\dots\wedge u_m\in\Lambda_0^m$
is perpendicular to $\bw^i=w_1^i\wedge\dots\wedge w_m^i$, if and only if the
vectors $P_iu_1,\dots,P_iu_m$ are linearly dependent. Here $P_i$ is the
orthogonal projection onto the $m$-dimensional linear subspace spanned by
$w_1^i,\dots,w_m^i$. Using the notation $B$ for the $m\times m$-matrix whose
columns are the vectors $P_iu_1,\dots,P_iu_m$ expressed in the basis
$\{w_1^i,\dots,w_m^i\}$, we observe that the vectors $P_iu_1,\dots,P_iu_m$ are
linearly dependent, if and only if the determinant of $B$ is zero.
This implies the existence of a polynomial map $Q:\mathbb R^{d^m}\to\mathbb R$
such that $\langle\bu\mid\bw^i\rangle=0$ if and only if $Q(u_1,\dots,u_m)=0$.
This, in turn, gives  that for all $i=1,\dots,k$ the set
\[
M_i=\{(u_1,\dots,u_m)\in\mathbb R^{d^m}\mid\langle\bu\mid\bw^i\rangle=0\}
\]
has codimension 1, and clearly, $0\in M_i$. Note that
$\bu=u_1\wedge\dots\wedge u_m=0$ if and only if
the vectors $u_1,\dots,u_m$ are linearly dependent, that is, all the
$m\times m$-minors are zero for the $d\times m$-matrix whose columns are the
vectors $u_1,\dots,u_m$. Since there are $\binom dm$ such minors  and 
$k<\binom dm$, there exists
$\overline\bu=(\overline u_1,\dots,\overline u_m)\in\cap_{i=1}^kM_i$ such that
$\overline\bu\ne 0$. In particular, $\langle\overline\bu\mid\bw^i\rangle=0$
for all
$i=1,\dots,k$.
Therefore condition $C(m)$ is not satisfied.

(b) From (a) we see that there must be at least  $\binom dm$ maps in the
family $\{S_i\}_{i\in I}$ for condition $C(m)$ to be satisfied. Note that when
$d$ is large and $1<m<d-1$ the number $\binom dm$ is much larger than $d$.

(c) If $m<s<m+1$ and $\{S_i\}_{i\in I}$ satisfies condition $C(s)$ then it
satisfies condition $C(t)$ for all $m\le t\le m+1$. In \cite[Lemma 2.6]{FS} 
it is shown that the irreducibility condition used
by Feng in \cite{Fe} is (essentially) equivalent to the condition $C(1)$.
\end{remark}

We proceed by introducing the notation needed for studying the validity of the
F-S condition. Let $F,G:\mathbb R^d\to\mathbb R^d$ be linear mappings with $d$ 
different real eigenvalues $\{\lambda_1,\dots,\lambda_d\}$ and 
$\{t_1,\dots,t_d\}$, respectively. Let $\{\hat e_1,\dots,\hat e_d\}$ and
$\{\tilde e_1,\dots,\tilde e_d\}$
be the corresponding normalised eigenvectors. We assume that for all
$k=1,\dots,d$
\begin{equation}\label{eigenspacecondition}
\begin{split}
&\lambda_{i_1}\cdots\lambda_{i_k}\ne\lambda_{j_1}\cdots\lambda_{j_k}\text{ and }
 t_{i_1}\cdots t_{i_k}\neq t_{j_1}\cdots t_{j_k}\text{ for all pairs }\\
& (i_1,\dots,i_k)\neq (j_1,\dots,j_k).
\end{split}
\end{equation}
Let $A=A(F,G):\mathbb R^d\to\mathbb R^d$ be the linear map satisfying
$\tilde e_i=A^{-1}e_i$, that is, $e_i=A\tilde e_i$ for all $1\le i\le d$. 
Let $\mathcal S_k=\mathcal S_k(F,G)$
be the family of compositions of $F$ and $G$ up to level $k$, that is,
\begin{equation}\label{Skdef}
\mathcal S_k=\{T_1\circ\cdots\circ T_j\mid 1\le j\le k\text{ and }
T_i\in\{F,G\} \text{ for all } 1\le i\le j\}.
\end{equation}
Using the eigenbasis $\{\hat e_1,\dots,\hat e_d\}$ of $F$ as the 
basis of $A$, we view 
$A$ as a $d\times d$-matrix. Denote by $\mathcal M_d$ the class of
$d\times d$-matrices whose minors are all non-zero.

With the above notation we prove two lemmas.

\begin{lemma}\label{takingall}
Let $1\le m\le d$ and $A\in\mathcal M_d$ be as above. For all 
$1\le i_1<\cdots<i_m\le d$ write
\begin{equation}\label{representation}
\hat e_{i_1}\wedge\cdots\wedge\hat e_{i_m}=\sum_{1\le j_1<\cdots<j_m\le d}
 c_{i_1\cdots i_m}^{j_1\cdots j_m}\,\tilde e_{j_1}\wedge\cdots\wedge\tilde e_{j_m}.
\end{equation}
Then $c_{i_1\cdots i_m}^{j_1\cdots j_m}\neq 0$ for all $(i_1,\dots, i_m)$ and
$(j_1,\dots, j_m)$.
\end{lemma}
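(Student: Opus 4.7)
The plan is to identify the coefficients $c_{i_1\cdots i_m}^{j_1\cdots j_m}$ with the $m\times m$ minors of the matrix $A$; once this is done, the hypothesis $A\in\mathcal{M}_d$ gives the conclusion immediately.

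First I would unravel what the matrix $A$ encodes. Working in the eigenbasis $\{\hat e_1,\dots,\hat e_d\}$ of $F$ (which is declared to be the basis in which $A$ is represented), the defining relation $e_i=A\tilde e_i$ says that $A$ carries the eigenbasis of $G$ onto the chosen orthonormal basis. Expanding each $\tilde e_i$ in the $\{\hat e_j\}$-basis as $\tilde e_i=\sum_j M_{ji}\hat e_j$ and reading the matrix identity $AM=I$ off of $A\tilde e_i=\hat e_i$, one gets $A=M^{-1}$. Writing $A_{ji}$ for the $(j,i)$-entry of the matrix $A$ in this basis, inversion therefore yields the inverse change of basis
\[
\hat e_i=\sum_{j=1}^d A_{ji}\,\tilde e_j.
\]

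Next I would invoke the classical formula for the induced action of a linear map on exterior powers. Substituting the expansion above into each factor of $\hat e_{i_1}\wedge\cdots\wedge\hat e_{i_m}$ and using multilinearity and antisymmetry of the wedge product,
\[
\hat e_{i_1}\wedge\cdots\wedge\hat e_{i_m}=\sum_{j_1<\cdots<j_m}\det\bigl(A_{\{j_1,\dots,j_m\},\,\{i_1,\dots,i_m\}}\bigr)\,\tilde e_{j_1}\wedge\cdots\wedge\tilde e_{j_m},
\]
where $A_{J,I}$ denotes the submatrix of $A$ with rows in $J$ and columns in $I$. Comparing with \eqref{representation}, the coefficient $c_{i_1\cdots i_m}^{j_1\cdots j_m}$ equals the $m\times m$ minor of $A$ with row indices $\{j_1,\dots,j_m\}$ and column indices $\{i_1,\dots,i_m\}$. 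Since by hypothesis $A\in\mathcal{M}_d$, every such minor is non-zero, so $c_{i_1\cdots i_m}^{j_1\cdots j_m}\ne 0$.

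The only delicate point is the first step, namely the careful identification of $A$ with the change-of-basis matrix between the two eigenbases under the convention the authors adopt when they view $A$ as a $d\times d$-matrix in the $\{\hat e_j\}$-basis; the book-keeping has to be done carefully to arrive at $\hat e_i=\sum_j A_{ji}\tilde e_j$ rather than a transposed or inverted version. Once this is settled, the rest is an immediate application of the determinantal expansion of the induced map on $\Lambda^m$, with no further computation required.
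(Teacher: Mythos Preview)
Your proposal is correct and follows essentially the same route as the paper: both arguments expand each $\hat e_{i_l}$ in the $\{\tilde e_j\}$-basis, use multilinearity and antisymmetry to identify $c_{i_1\cdots i_m}^{j_1\cdots j_m}$ with the minor of $A$ on rows $j_1,\dots,j_m$ and columns $i_1,\dots,i_m$, and then invoke $A\in\mathcal M_d$. The only difference is that the paper simply asserts the relation $\hat e_{i_l}=\sum_j A_{ji_l}\tilde e_j$, whereas you spend a paragraph deriving it from the definition of $A$; your extra care there is justified (and your conclusion agrees with the paper's convention), though note that the $\hat e_i$ are merely normalised eigenvectors, not an orthonormal basis.
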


\begin{proof}
We denote the set of all permutations of $(j_1,\dots,j_m)$ by 
$\Per(j_1,\dots,j_m)$ and write $\sgn(\sigma)$ for the sign of a permutation 
$\sigma\in\Per(j_1,\dots,j_m)$.
Since $\hat e_{i_l}=\sum_{j=1}^dA_{j i_l}\tilde e_j$ for all $1\le l\le m$ and
the wedge product is antisymmetric and multilinear, we have
\begin{align*}
\hat e_{i_1}\wedge\cdots\wedge\hat e_{i_m}&=\sum_{j_1=1}^d\dots\sum_{j_m=1}^d 
 A_{j_1i_1}\cdots A_{j_mi_m}\tilde e_{j_1}\wedge\cdots\wedge\tilde e_{j_m}\\ 
&=\sum_{1\le j_1<\cdots<j_m\le d}\bigl(\sum_{\sigma\in\Per(j_1,\dots,j_m)}
 \sgn(\sigma)A_{\sigma_1i_1}\cdots A_{\sigma_mi_m}\bigr)\tilde e_{j_1}\wedge\cdots
 \wedge\tilde e_{j_m}\\
&=c_{i_1\cdots i_m}^{j_1\cdots j_m}\,\tilde e_{j_1}\wedge\cdots\wedge\tilde e_{j_m}.
\end{align*}
Thus the coefficient
$c_{i_1\cdots i_m}^{j_1\cdots j_m}$ is the minor of $A$ determined by the
columns $i_1,\dots,i_m$ and rows $j_1,\dots,j_m$, and by the definition of
$\mathcal M_d$, we have $c_{i_1\cdots i_m}^{j_1\cdots j_m}\ne 0$.
\end{proof}

For all $1\le m\le d$, define
\[
B_1=\{\hat e_{i_1}\wedge\cdots\wedge\hat e_{i_m}\mid 1\le i_1<\dots<i_m\le d\}
\]
and
\[
B_2=\{\tilde e_{j_1}\wedge\cdots\wedge\tilde e_{j_m}\mid 1\le j_1<\dots<j_m
  \le d\}.
\]
Then $B_1$ and $B_2$ are bases of $\Lambda^m$. Furthermore, the elements of
$B_1$ and $B_2$ are the eigenvectors of $F:\Lambda^m\to\Lambda^m$ and
$G:\Lambda^m\to\Lambda^m$ with eigenvalues
$\lambda_{i_1}\cdots\lambda_{i_m}$ and $t_{j_1}\cdots t_{j_m}$, respectively.

\begin{remark}\label{Vandermonde}
Let $a_1,\dots,a_d\in\mathbb R\setminus\{0\}$ with $a_i\ne a_j$ for $i\ne j$
and let $v=(v_1,\dots,v_d)\in\mathbb R^d$ with $v_i\ne 0$ for all
$i=1,\dots,d$. Denoting by $(a_j)^i$ the $i$-th power of $a_j$, it
follows from the Vandermonde determinant formula that the vectors
$\{((a_1)^iv_1,\dots,(a_d)^iv_d)\mid i=k,\dots,k+d-1\}$ span $\mathbb R^d$ for
all $k\in\mathbb N$.
By induction it is easy to see that the vectors
\[
\{v^{i_j}=((a_1)^{i_j}v_1,\dots,(a_d)^{i_j}v_d)\mid j=1,\dots,d\text{ and }
i_1<\dots<i_d\}
\]
span $\mathbb R^d$. Indeed, the case $d=1$ is obvious. Assuming that the claim
is true for $d$, we show that the vectors $\{v^{i_1},\dots,v^{i_{d+1}}\}$ span
$\mathbb R^{d+1}$. Suppose to the contrary that this is not the case,
that is, there is $j$ such that $v^{i_j}=\sum_{k\ne j}\alpha_kv^{i_k}$.
For all $k=1,\dots,d+1$ we denote by $\Pi_k:\mathbb R^{d+1}\to\mathbb R^d$ the
projection which omits the $k^{\text{th}}$ coordinate. Fix $l\ne j$. Now the
induction hypothesis implies that
$\Pi_jv^{i_j}=\sum_{k\ne j}b_k\Pi_jv^{i_k}$ and
$\Pi_lv^{i_j}=\sum_{k\ne l}c_k\Pi_lv^{i_k}$ where the coefficients $b_k$ and
$c_k$ are unique. Since $a_l\ne a_j$, we have $b_k\ne c_k$ for some $k\ne j$.
On the other hand,
$\Pi_jv^{i_j}=\sum_{k\ne j}\alpha_k\Pi_jv^{i_k}$ and
$\Pi_lv^{i_j}=\sum_{k\ne l}\alpha_k\Pi_lv^{i_k}$, and therefore, 
$\alpha_k=b_k=c_k$ for all $k\ne j$ which is a contradiction.
\end{remark}

\begin{lemma}\label{nonzerocoordinates}
Let $0\ne\bv\in\Lambda^m$ and $n=\binom dm$. Then there are at most $n(n-1)$
numbers $i\in\mathbb N$ with the property that at least one coordinate
of $F^i\bv$ with respect to the basis $B_2$ is equal to zero.
\end{lemma}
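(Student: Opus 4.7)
The plan is to expand $F^i\bv$ in both bases $B_1$ and $B_2$ and then apply the Vandermonde-type statement of Remark~\ref{Vandermonde} to bound the number of integer zeros of each coordinate function.  First I would write $\bv=\sum_I a_I \hat e_I$ in the eigenbasis $B_1$ of $F$, where $I$ ranges over strictly increasing $m$-tuples from $\{1,\dots,d\}$, and set $K=\{I:a_I\neq 0\}$, which is non-empty since $\bv\neq 0$.  Because $\hat e_I$ is an eigenvector of $F:\Lambda^m\to\Lambda^m$ with eigenvalue $\lambda_I:=\lambda_{i_1}\cdots\lambda_{i_m}$, one obtains $F^i\bv=\sum_{I\in K}a_I\lambda_I^{\,i}\hat e_I$.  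Rewriting each $\hat e_I=\sum_J c_I^J\,\tilde e_J$ via Lemma~\ref{takingall}, whose point is that all $c_I^J$ are non-zero, the $J$-th coordinate of $F^i\bv$ in the basis $B_2$ becomes the exponential sum
\[
\alpha_J(i)=\sum_{I\in K}a_Ic_I^J\,\lambda_I^{\,i}.
\]

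Next I would show that for each fixed $J$ the function $\alpha_J$ has at most $|K|-1$ integer zeros.  Condition \eqref{eigenspacecondition} guarantees that the numbers $\{\lambda_I:I\in K\}$ are pairwise distinct and non-zero, since the vanishing of any $\lambda_i$ would force coincidences among products of index tuples that share that index.  Moreover the coefficients $a_Ic_I^J$ are non-zero for every $I\in K$ by the definition of $K$ and by Lemma~\ref{takingall}.  If $\alpha_J$ were to vanish at $|K|$ distinct integers $i_1<\cdots<i_{|K|}$, then the non-zero vector $(a_Ic_I^J)_{I\in K}$ would lie in the kernel of the $|K|\times|K|$ matrix with $(j,I)$-entry $\lambda_I^{\,i_j}$.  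Applying Remark~\ref{Vandermonde} with ambient dimension $|K|$, $a_l=\lambda_{I_l}$ and $v_l=1$ shows that the rows of this matrix span $\mathbb{R}^{|K|}$, so the matrix is invertible, contradicting the existence of such a kernel vector.

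To finish, since the basis $B_2$ has $n=\binom{d}{m}$ elements and each $\alpha_J$ vanishes at at most $|K|-1\le n-1$ integers, the set of $i\in\mathbb{N}$ for which at least one coordinate of $F^i\bv$ equals zero is a union of $n$ sets of cardinality at most $n-1$, yielding the required bound $n(n-1)$.  The only non-routine step is the invertibility of the generalised Vandermonde matrix when the bases $\lambda_I$ may be negative; this is exactly the content of the inductive argument in Remark~\ref{Vandermonde}, so once that ingredient is in place the proof is a short direct assembly.
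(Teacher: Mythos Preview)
Your proof is correct and essentially identical to the paper's: both expand $\bv$ in the eigenbasis $B_1$, use Lemma~\ref{takingall} to guarantee that all change-of-basis coefficients $c_I^J$ are nonzero, and then invoke Remark~\ref{Vandermonde} to bound the integer zeros of each $B_2$-coordinate by $|K|-1$, summing over the $n$ coordinates. The only cosmetic difference is that the paper phrases the Vandermonde step geometrically---any $|K|$ iterates $F^{i_1}\bv,\dots,F^{i_{|K|}}\bv$ span the subspace $V_{\bv}$, which meets each coordinate hyperplane $W_j$ in dimension strictly less than $|K|$---whereas you phrase it as invertibility of the matrix $(\lambda_I^{\,i_j})$.
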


\begin{proof} Let $\bv=(v_1,\dots,v_n)$ be the coordinates of $\bv$ with respect
to the basis $B_1$ and let $k$ be the number of non-zero coordinates.
We denote by $V_{\bv}$ the $k$-dimensional plane spanned by those basis vectors
in $B_1$ that correspond to the non-zero coordinates of $\bv$. Let
$\gamma_1,\dots,\gamma_n$ be the eigenvalues of $F:\Lambda^m\to\Lambda^m$.
Observe that for the $i$-th  iterate $F^i$ of $F$ we have
$F^i\bv=(\gamma_1^iv_1,\dots,\gamma_n^iv_n)$. Combining
\eqref{eigenspacecondition} with Remark~\ref{Vandermonde},
implies that the set $\{F^{i_1}\bv,\dots,F^{i_k}\bv\}$ spans $V_{\bv}$ for all
natural numbers $i_1<i_2<\cdots<i_k$. For all $j=1,\dots,n$, let
\[
W_j=\{\bw\in\Lambda^m\mid\bw=(w_1,\dots,w_n)\text{ with respect to }B_2
\text{ and }w_j=0\}.
\]
Applying Lemma~\ref{takingall} gives for all $j=1,\dots,n$ and
$1\le i_1<\dots<i_m\le d$ that 
$\hat e_{i_1}\wedge\cdots\wedge\hat e_{i_m}\notin W_j$.
Thus the dimension of $V_\bv\cap W_j$ is strictly less than $k$. We conclude
that for all $j=1,\dots,n$, there are at most $k-1$ indices $i$ such that
$F^i\bv\in W_j$, and therefore, there are at most $n(k-1)$ indices $i$ such
that $F^i\bv\in W_j$ for some $j=1,\dots,n$. Since this is true for all
$1\le k\le n$, the claim follows.
\end{proof}

Now we are ready to prove our main theorem in this section. For this
purpose, set $n_0=\max\limits_{0\le m\le d}\binom dm$. After proving 
Corollary~\ref{typical}, we discuss the criterion which is based on the 
following theorem and gives a sufficient condition for the validity of the 
F-S condition (see Remark~\ref{suffcon}).

\begin{theorem}\label{CsCm}
Let $F$ and $G$ be as in \eqref{eigenspacecondition} and assume
that
$A=A(F,G)\in\mathcal M_d$. Then the family $\mathcal S_{2n_0^2}$ 
defined in
\eqref{Skdef} satisfies the condition $C(s)$ for all $0\le s\le d$.
\end{theorem}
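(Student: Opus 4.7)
The plan is to exhibit, for each pair of non-zero decomposable multivectors $\bv,\bw\in\Lambda_0^m$ (and additionally $\bv\wedge v,\bw\wedge w\in\Lambda_0^{m+1}$ when $s$ is non-integral with $m=\lfloor s\rfloor$), an explicit composition of the form $G^k\circ F^i$ with $i+k\le 2n_0^2$ that witnesses the required non-vanishing. Such compositions certainly belong to $\mathcal{S}_{2n_0^2}$.

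The idea exploits the two diagonal structures: $F$ acts diagonally with respect to $B_1$ on $\Lambda^m$ (with eigenvalues $\lambda_{i_1}\cdots\lambda_{i_m}$) while $G$ acts diagonally with respect to $B_2$ (with eigenvalues $t_{j_1}\cdots t_{j_m}$). First I would apply $F^i$ to $\bv$ and arrange for all $B_2$-coordinates of $F^i\bv$ to be non-zero; by Lemma \ref{nonzerocoordinates} the set of bad exponents has cardinality at most $n_m(n_m-1)$, where $n_m=\binom{d}{m}$. When $s$ is non-integral I would impose the same condition simultaneously on $F^i(\bv\wedge v)$ at level $m+1$, which forbids at most $n_{m+1}(n_{m+1}-1)$ further values, for a total of at most $2n_0(n_0-1)$. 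Hence a valid $i$ can be selected from $\{1,\dots,2n_0^2-2n_0+1\}$.

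Next, with such $i$ fixed, I would apply $G^k$ and compute the inner product in $B_2$:
\[
\langle G^k F^i\bv\mid\bw\rangle=\sum_{\bj}a_{\bj}\,b_{\bj}\,(t_{\bj})^k,
\]
where $a_{\bj}\ne 0$ by the choice of $i$, the $b_{\bj}$ are the $B_2$-coordinates of $\bw$ (not all zero), and the products $t_{\bj}=t_{j_1}\cdots t_{j_m}$ are pairwise distinct by \eqref{eigenspacecondition}. A Vandermonde-type argument analogous to Remark \ref{Vandermonde} shows that such a non-trivial exponential sum vanishes for at most $n_m-1$ values of $k$. Requiring the analogous non-vanishing at level $m+1$ rules out at most $2n_0-2$ values of $k$ in total, so a valid $k$ exists in $\{1,\dots,2n_0-1\}$. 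Combining, $G^k\circ F^i$ has composition length at most $(2n_0^2-2n_0+1)+(2n_0-1)=2n_0^2$, as required.

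The main obstacle I anticipate is the bookkeeping of counts so that the total stays within $2n_0^2$; a related subtlety is ensuring that the coefficients $a_{\bj}$ are genuinely non-zero after the $F^i$-step, which is exactly where the hypothesis $A\in\mathcal{M}_d$ enters through Lemma \ref{takingall} (feeding into Lemma \ref{nonzerocoordinates}). For integer $s=m$ only a single level needs to be controlled, so the same scheme works with room to spare, and the degenerate cases $s=0$ and $s=d$ are immediate since $\Lambda^0$ and $\Lambda^d$ are one-dimensional.
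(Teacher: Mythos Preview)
Your proposal is correct and follows essentially the same two-step strategy as the paper: first choose $i$ so that $F^i\bv$ (and $F^i(\bv\wedge v)$) have all $B_2$-coordinates non-zero via Lemma~\ref{nonzerocoordinates}, then choose $k$ via a Vandermonde argument so that $G^kF^i\bv$ (and the level-$(m+1)$ analogue) avoid the relevant orthogonal complements, with the same bookkeeping $M+N\le 2n_0^2$. One small imprecision: since $B_2$ is not orthonormal, the coefficients in your exponential sum should be $c_{\bj}=\langle\tilde e_{j_1}\wedge\cdots\wedge\tilde e_{j_m}\mid\bw\rangle$ rather than the $B_2$-coordinates of $\bw$, but this does not affect the argument since these still cannot all vanish when $\bw\ne 0$; the paper sidesteps this by phrasing the second step geometrically (the iterates $G^j(F^{i_0}\bv)$ span $\Lambda^m$, hence at most $n_1-1$ of them lie in $\bw^\perp$).
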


\begin{proof}
By Remark~\ref{FSremark} (c) it is enough to prove that the family
$\mathcal S_{2n_0^2}$ satisfies the condition $C(s)$ for non-integral $s$.
Letting $m$ be the integer part of $s$, set $n_1=\binom dm$ and
$n_2=\binom d{m+1}$ and define $M=n_1(n_1-1)+n_2(n_2-1)+1$ and $N=n_1+n_2-1$.
Let $0\neq\bv,\bw\in\Lambda^m$ and $0\neq\bu,\bz\in\Lambda^{m+1}$. By applying
Lemma~\ref{nonzerocoordinates} to the iterates $F^i\bv$ and $F^i\bu$, where
$1\le i\le M$, we deduce that there exists $1\le i_0\le M$ such that all
coordinates of the iterates $F^{i_0}\bv$ and $F^{i_0}\bu$ with respect to the
basis $B_2$ are non-zero. Furthermore, from Remark~\ref{Vandermonde} we see
that for all $j_1<\dots<j_{n_1}$ the vectors
$G^{j_1}(F^{i_0}\bv),\dots,G^{j_{n_1}}(F^{i_0}\bv)$ span $\Lambda^m$. Hence,
there are at least $N-n_1+1$ indices $j=1,\dots,N$ such that the points
$G^j(F^{i_0}\bv)$ do not belong to the orthogonal complement 
$\bw^\perp$ of $\bw$.
A similar argument implies that among these  $N-n_1+1$ indices there exists
$j_0$ such that $G^{j_0}(F^{i_0}\bu)\notin\bz^\perp$, and therefore,
\[
\langle G^{j_0}F^{i_0}\bv\mid\bw\rangle\ne 0\text{ and }
  \langle G^{j_0}F^{i_0}\bu\mid\bz\rangle\ne 0
\]
implying that $\mathcal S_{M+N}$ satisfies $C(s)$. Since $M+N\le 2n_0^2$ this
completes the proof of the claim.
\end{proof}

Let $k\in\mathbb N$. We identify the space of families
$\mathcal F=\{S_i:\mathbb R^d\to\mathbb R^d\}_{i=1}^k$ of linear maps
with $\mathbb R^{d^2k}$. For $\mathcal F\in\mathbb R^{d^2k}$ define
\[
\mathcal S_l(\mathcal F)=\{S_{i_1}\circ\cdots\circ S_{i_j}\mid 1\le j\le l
\text{ and }S_{i_m}\in\mathcal F\text{ for all } 1\le m\le j\}.
\]
With this notation we have the following consequence of Theorem~\ref{CsCm}.

\begin{corollary}\label{typical}
Letting $k\ge 2$ be a natural number, the set
\[
\mathcal C=
\{\mathcal F\in\mathbb R^{d^2k}\mid\mathcal S_{2n_0^2}(\mathcal F)
  \text{ satisfies }C(s)\text{ for all }0\le s\le d\}
\]
is open, dense and has full Lebesgue measure. More precisely, 
$\mathbb R^{d^2k}\setminus\mathcal C$ is contained in a finite union of 
$(d^2k-1)$-dimensional algebraic varieties.
\end{corollary}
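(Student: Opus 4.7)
The plan is to derive Corollary~\ref{typical} from Theorem~\ref{CsCm} via an algebraic-geometric argument. The key point is that if any pair $S_i, S_j$ in a family $\mathcal{F}$ satisfies the hypotheses of Theorem~\ref{CsCm}, then $\mathcal{S}_{2n_0^2}(S_i, S_j) \subseteq \mathcal{S}_{2n_0^2}(\mathcal{F})$ and so $\mathcal{F} \in \mathcal{C}$. Thus I first exhibit a concrete $\mathcal{F}_0 \in \mathcal{C}$: take $F_0 = \mathrm{diag}(\lambda_1, \ldots, \lambda_d)$ with multiplicatively independent positive real eigenvalues (e.g.\ the first $d$ primes) so that \eqref{eigenspacecondition} holds; take $G_0 = UDU^{-1}$ with $D$ analogous and $U$ chosen so that $A(F_0, G_0) \in \mathcal{M}_d$ --- a generic non-vanishing condition on $U$ --- and set $\mathcal{F}_0 = (F_0, G_0, S_3, \ldots, S_k)$ with $S_3, \ldots, S_k$ arbitrary. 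Then $\mathcal{F}_0 \in \mathcal{C}$ by Theorem~\ref{CsCm}.

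Next, with $\mathcal{D} = \mathbb{R}^{d^2k} \setminus \mathcal{C}$, I describe $\mathcal{D}$ geometrically. By Remark~\ref{FSremark}(a), failure of $C(m)$ is the existence of a point $[\bv]$ in the real Grassmannian $\mathrm{Gr}(m, d)(\mathbb{R})$ such that every maximal minor of the matrix $[S\bv]_{S \in \mathcal{S}_{2n_0^2}(\mathcal{F})}$ vanishes; the non-integer $s$ case reduces to a finite union of analogous existence statements over a compact flag variety. Since real Grassmannians and flag varieties are compact, $\mathcal{D}$ is the projection of a closed set along a compact fiber, hence closed in the usual topology; this proves $\mathcal{C}$ is open.

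For the algebraic-variety inclusion I pass to $\mathbb{C}$, where the complex Grassmannian is a projective variety and so the corresponding projection is Zariski closed: the complex failure set $\mathcal{D}^{\mathbb{C}} \subseteq \mathbb{C}^{d^2k}$ is therefore Zariski closed. The proof of Theorem~\ref{CsCm} is purely linear-algebraic --- Vandermonde determinants plus non-vanishing of the minors of $A$ --- and extends verbatim over $\mathbb{C}$, so $\mathcal{F}_0$, viewed as a point in $\mathbb{C}^{d^2k}$, satisfies complex $C(s)$ and lies outside $\mathcal{D}^{\mathbb{C}}$. Hence $\mathcal{D}^{\mathbb{C}}$ is a proper Zariski closed subset of $\mathbb{C}^{d^2k}$, contained in the zero set $\{f = 0\}$ of some non-zero polynomial $f$ with real coefficients. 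Because every real $\bv$ witnessing failure of real $C(s)$ also witnesses complex failure, $\mathcal{D} \subseteq \mathcal{D}^{\mathbb{C}} \cap \mathbb{R}^{d^2k} \subseteq \{f = 0\}$, a $(d^2k-1)$-dimensional real hypersurface. Taking the finite union over $m$ and over the non-integer $s$ cases then places $\mathcal{D}$ inside a finite union of $(d^2k-1)$-dimensional algebraic varieties, which has Lebesgue measure zero, so $\mathcal{C}$ is also dense and of full measure. The main obstacle is handling the existential quantifier over decomposable $\bv$: over $\mathbb{R}$ the corresponding projection is only semi-algebraic, not Zariski closed in general, so the detour through $\mathbb{C}$ (where projective completeness gives Zariski closedness) is essential for the algebraic-variety conclusion.
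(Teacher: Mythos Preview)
Your approach is correct but differs meaningfully from the paper's. Both arguments reduce to $k=2$ and invoke Theorem~\ref{CsCm}. The paper then identifies the containing varieties \emph{explicitly} as the loci where either the eigenvalue-product condition~\eqref{eigenspacecondition} fails or some minor of $A(F,G)$ vanishes; since real matrices may have complex eigenvalues, the paper extends Theorem~\ref{CsCm} to that case by analysing how $F$ acts on $\Lambda^m$ via real invariant $2$-planes, and concludes that the complement of these explicit hypersurfaces lies in $\mathcal C$. Your route is more abstract: you show the complex failure locus $\mathcal D^{\mathbb C}$ is Zariski closed by projecting a closed incidence set from a product of projective flag varieties (using completeness), then exhibit a single point outside it. This buys a cleaner argument that sidesteps the complex-eigenvalue analysis on exterior powers, at the cost of importing elimination theory and losing the paper's explicit description of the exceptional varieties in terms of eigenstructure. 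One point worth making explicit: you still need the proof of Theorem~\ref{CsCm} to go through over $\mathbb C$ (with the complex-bilinear, not Hermitian, pairing on $\Lambda^m$) in order to certify $\mathcal F_0\notin\mathcal D^{\mathbb C}$; this is indeed straightforward since the ingredients---Vandermonde determinants and the minor computation of Lemma~\ref{takingall}---are field-independent, and the bilinear form remains non-degenerate so that $\bw^\perp$ is a genuine hyperplane, but it deserves a sentence.
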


\begin{proof}
We start with an easy observation: assuming that $\mathcal F\subset\mathcal G$
are families of linear maps on $\mathbb R^d$ and $\mathcal F$ satisfies
condition $C(s)$, then $\mathcal G$ satisfies it too. Thus it is enough to
prove the claim in the case $k=2$. 
The set of $d\times d$-matrices with
a fixed non-zero minor is a $(d^2-1)$-dimensional algebraic variety. 
Since the number of minors is finite, the set
$\mathbb R^{d^2}\setminus\mathcal M_d$ can be represented as a finite union of
$(d^2-1)$-dimensional algebraic varieties, implying that 
$\mathcal M_d\subset\mathbb R^{d^2}$ is open, dense and has full Lebesgue
measure. Moreover, note that the set of pairs
$(F,G)$ of linear maps having $d$ real eigenvalues and not satisfying 
\eqref{eigenspacecondition} is a finite union of $(2d^2-1)$-dimensional 
algebraic varieties. Thus the set of pairs $(F,G)$
satisfying the assumptions of Theorem~\ref{CsCm} is
open and has positive Lebesgue measure. For the purpose of verifying that
$\mathcal C$ is dense and has full  Lebesgue measure, we need to extend
our argument to the case where $F$ and $G$ are allowed to have complex
eigenvalues satisfying \eqref{eigenspacecondition}.

Recall that if $\lambda=re^{i\theta}$ is a complex eigenvalue of $F$, also
$\overline\lambda=re^{-i\theta}$ is an eigenvalue of $F$, and there is a two
dimensional invariant subspace $V\subset\mathbb R^d$ where $F$ acts as the
rotation by angle $\theta$ composed with scaling by $r$. Let
$e_1, e_2\in\mathbb R^d$ be such that $V$ is spanned by $e_1$ and $e_2$ and let
$e_3$ be an eigenvector of $F$ corresponding to a real eigenvalue $t$. Then
$e_3\wedge e_1$ and $e_3\wedge e_2$ span an eigenspace of $F$ on $\Lambda^2$
corresponding to the eigenvalue $t\lambda$.
If $\rho$ is another complex eigenvalue of $F$ and $e_4$ and $e_5$ span
the corresponding eigenspace, then $e_1\wedge e_2$ and $e_4\wedge e_5$ are
eigenvectors of $F$ on $\Lambda^2$ with eigenvalues $\lambda\overline\lambda$
and $\rho\overline\rho$,
respectively. The 4-dimensional subspace spanned by
$\{e_1\wedge e_4,e_1\wedge e_5,e_2\wedge e_4,e_2\wedge e_5\}$ is divided into
two invariant 2-dimensional subspaces corresponding to the complex
eigenvalues
$\lambda\rho$ and $\lambda\overline\rho$. By \eqref{eigenspacecondition},
the numbers $\lambda\overline\lambda,\rho\overline\rho,\lambda\rho$
and $\lambda\overline\rho$ are different. In this way we find a basis of
$\Lambda^m$ consisting of eigenvectors of $F$. Since the Vandermonde
determinant formula applies also for complex entries, 
Theorem \ref{CsCm} is valid for an open dense set of pairs of linear maps
$(F,G)$ having full Lebesgue measure. This completes the proof.
\end{proof}

\begin{remark}\label{suffcon} 
(a) Let $\mathcal F=\{T_i:\mathbb R^d\to\mathbb R^d\}_{i=1}^m$
be an iterated function system consisting of affine mappings
$T_i(x)=S_i(x)+a_i$. When considering the validity of the F-S
condition, the translation parts $a_i$ play no role. From Theorem~\ref{CsCm}
and  Corollary~\ref{typical} we conclude that if there are $i\ne j$ such that
the eigenvalues of $S_i$ and $S_j$ satisfy \eqref{eigenspacecondition} and
the eigenvectors of $S_i$ are mapped to those of $S_j$ by some 
$A\in\mathcal M_d$ then $\mathcal S_{2n_0^2}(\mathcal F)$ satisfies 
the condition $C(s)$ for all $0\le s\le d$.

(b) Let $\mathcal F$ be as in remark (a). If $\mathcal F$ is not irreducible,
that is, if there exists a non-trivial proper subspace $V\subset\mathbb R^d$
satisfying $S_i(V)\subset V$ for all $i=1,\dots,m$, then by 
Remark~\ref{FSremark}.(a) the family
$\mathcal S_N(\mathcal F)$ does not satisfy the condition $C(s)$ for any
$0<s<d$ and for any $N\in\mathbb N$. 
\end{remark}

\section{Random affine code tree fractals}\label{codetree}

In this section we consider the Falconer-Sloan setting for a class of
random affine code tree fractals introduced in \cite{JJKKSS} which are locally 
random but globally nearly 
homogeneous. It turns out that the earlier results in \cite{JJKKSS}
can be improved under a probabilistic version of the condition $C(s)$. We begin
by recalling the notation from \cite{JJKKSS}. 

Let 
$\mathcal F=\{F^\lambda=\{f_1^\lambda,\dots,f_{M_\lambda}^\lambda\}
\mid\lambda\in\Lambda\}$ be a family of iterated function systems on
$\mathbb R^d$.  Here the index set $\Lambda$ is a topological 
space. Assume
that for all $i=1,\dots,M_\lambda$ the maps
$f_i^\lambda\colon\mathbb R^d\to\mathbb R^d$ are affine, that is,
$f_i^\lambda(x)= T_i^\lambda(x)+a_i^\lambda$, where $T_i^\lambda$ is a non-singular
linear mapping and $a_i^\lambda\in\mathbb R^d$.
We consider the case where the norms and the numbers of the maps are uniformly
bounded meaning that
\[
\sup_{\lambda\in\Lambda,i=1,\dots,M_\lambda}\Vert T_i^\lambda\Vert<1,
\sup_{\lambda\in\Lambda,i=1,\dots,M_\lambda}|a_i^\lambda|<\infty\text{ and }
 M=\sup_{\lambda\in\Lambda}M_\lambda<\infty.
\]
Identifying $F^\lambda$ with an element of $\mathbb R^{(d^2+d)M_\lambda}$, gives
$\mathcal F\subset\bigcup_{i=1}^M\mathbb R^{(d^2+d)i}$, where the union is 
disjoint. We equip $\bigcup_{i=1}^M\mathbb R^{(d^2+d)i}$ with the natural 
topology and assume that $\lambda\mapsto F^\lambda$ is a Borel map. Similarly,
the linear parts $T_i^\lambda$ are embedded in $\mathbb R^{d^2M_\lambda}$.

We continue by introducing the concept of a code tree which is a modification 
of the standard tree construction of the attractor of an iterated function 
system. Indeed, instead of using the same family of maps at each construction
step, different families with different numbers of maps are 
allowed in a code tree.
Setting $I=\{1,\dots,M\}$, the length of a word $\tau\in I^k$ is $|\tau|=k$.
Consider a function $\omega\colon\bigcup_{k=0}^\infty I^k\to\Lambda$, where
$I^0=\{\emptyset\}$. We associate to $\omega$ a natural tree 
rooted at 
$\emptyset$ as follows: Let $\Sigma^{\omega}_{*}\subset\bigcup_{k=0}^\infty I^k$
be the unique set satisfying the following conditions:
\begin{itemize}
\item $\emptyset\in\Sigma^\omega_*$,
\item if $i_1\cdots i_k\in\Sigma^\omega_*$ and
   $\omega(i_1\cdots i_k)=\lambda$, then $i_1\cdots i_kl\in\Sigma^\omega_*$
   if and only if $l\leq M_\lambda$, 
\item if $i_1\cdots i_k\notin\Sigma^\omega_*$, then for all $l$ we have
   $i_1\cdots i_kl\notin\Sigma^\omega_*$.
\end{itemize}
The function $\omega$ restricted to $\Sigma^\omega_*$ is called
an {\it $\mathcal F$-valued code tree} and the set of all $\mathcal F$-valued 
code trees is denoted by $\Omega$. 
Note that in a code tree the vertex 
$i_1\cdots i_k$ may be identified with the function system 
$F^{\omega(i_1\cdots i_k)}$, and moreover, the edge connecting $i_1\cdots i_k$ to
$i_1\cdots i_kl$ may be identified with the map $f_l^{\omega(i_1\cdots i_k)}$.
{\it A sub code tree} of a code tree $\omega$ is the restriction of $\omega$ 
to a subset $B\subset\Sigma_*^\omega$, where $B$ is rooted at some vertex 
$i_1\cdots i_k\in\Sigma_*^\omega$ and $B$ contains all descendants of 
$i_1\cdots i_k$ which belong to $\Sigma_*^\omega$. We endow $\Omega$ with the 
topology generated by the sets 
\[
\{\omega\in\Omega\mid\Sigma_*^\omega\cap\bigcup_{j=0}^k I^j=J\text{ and }
 \omega(\bi)\in U_\bi\text{ for all }\bi\in J\},
\]
where $k\in\mathbb N$, $U_\bi\subset\Lambda$ is open for all $\bi\in J$ and 
$J\subset\bigcup_{j=0}^k I^j$ is a tree rooted at $\emptyset$ and having all 
leaves in $I^k$. With this topology functions $\omega_1$ and $\omega_2$ are 
``close'' to each other if their supports $\Sigma_*^{\omega_1}$ and 
$\Sigma_*^{\omega_2}$ agree up to the level $k$ and the values $\omega_1(\bi)$
and $\omega_2(\bi)$ are ``close'' to each other for all words $\bi$ with 
$|\bi|\le k$.  

We equip $I^{\mathbb N}$ with the product topology. For each code tree
$\omega\in\Omega$, define
\[
\Sigma^\omega=\{\bi=i_1i_2\cdots\in I^{\mathbb N}\mid
  i_1\cdots i_n\in\Sigma^\omega_*\text{ for all }n\in\mathbb N\}.
\]
Then $\Sigma^\omega$ is compact.
For all $k\in\mathbb N$ and $\bi\in\Sigma^\omega\cup\bigcup_{j=k}^\infty I^j$,
let $\bi_k=i_1\cdots i_k$ be the initial word of $\bi$ with length $k$.
We use the following type of natural abbreviations for compositions:
\[
f^\omega_{\bi_k}=f_{i_1}^{\omega(\emptyset)}\circ f_{i_2}^{\omega(i_1)}
  \circ\dotsb\circ f_{i_k}^{\omega(i_1\cdots i_{k-1})}\text{ and }
T_{\bi_k}^\omega=T_{i_1}^{\omega(\emptyset)}T_{i_2}^{\omega(i_1)}\cdots
  T_{i_k}^{\omega(i_1\cdots i_{k-1})}.
\]
Observe that, by the definition of the topology on $\Omega$, the maps 
$\omega\mapsto f^\omega_{\bi_k}$ and $\omega\mapsto T_{\bi_k}^\omega$ are Borel
measurable. The code tree fractal corresponding to $\omega\in\Omega$ is
$A^\omega =\{ Z^\omega(\bi)\mid \bi\in\Sigma^\omega\}$, where
$Z^{\omega}(\bi)=\lim_{k\to\infty}f^\omega_{\bi_k}(0)$. Note that the attractor
$A^\omega$ is well-defined since the maps $f_i^\lambda$ are uniformly
contracting and the translation vectors $a_i^\lambda$ belong to a bounded set.
For $k\in\mathbb N$, $\omega\in\Omega$ and $\bi\in\Sigma^\omega$, the 
{\it cylinder of length $k$ determined by $\bi$} is
\[
[\bi_k]=\{\bj\in\Sigma^\omega\mid j_l=i_l\text{ for all }l=1,\dots,k\}.
\]

Next we introduce the concept of a neck level which is 
an essential feature
of our model. The existence of neck levels guarantees that in our setting
the attractor is globally nearly homogeneous. In fact, if 
$N_m\in\mathbb N$ is a 
neck level of $\omega$, then all the sub code trees of $\omega$ rooted at
vertices $\bi\in\Sigma_*^\omega$ with $|\bi|=N_m$ are identical. In particular,
the attractor $A^\omega$ is a finite union of affine copies of the attractor of
the common sub code tree. Neck levels play an important role in the study of 
$V$-variable fractals, see for example \cite{BHS2005}, \cite{BHS2008} and 
\cite{BHS12}.   

{\it A neck list} $N=(N_m)_{m\in\mathbb N}$ is an increasing sequence of 
natural numbers. Let $\widetilde\Omega$ be the set of
$(\omega,N)\in\Omega\times\mathbb N^{\mathbb N}$ satisfying
\begin{itemize}
\item $N_m<N_{m+1}$ for all $m\in\mathbb N$ and
\item if $\bi_{N_m}\bj_l,\bi'_{N_m}\in\Sigma_*^\omega$, then
$\bi'_{N_m}\bj_l\in\Sigma_*^\omega$ and
$\omega(\bi_{N_m}\bj_l)=\omega(\bi'_{N_m}\bj_l)$.
\end{itemize}
The first condition means that $N$ is a neck list and the second 
condition 
guarantees that the sub code trees rooted at a certain neck level are 
identical.
{\it A shift } $\Xi\colon\widetilde\Omega\to\widetilde\Omega$ is defined by
means of 
neck levels, that is, $\Xi(\omega,N)=(\hat\omega,\hat N)$, where
$\hat N_m=N_{m+1}-N_1$ and $\hat\omega(\bj_l)=\omega(\bi_{N_1}\bj_l)$
for all $m,l\in\mathbb N$. We denote the elements of $\widetilde\Omega$
by $\tilde\omega$, 
and for all $i\in\mathbb N$ we write $N_i(\tilde\omega)=N_i$
for the projection of $\tilde\omega=(\omega,N)$ onto the $i$-th coordinate of 
$N$. Moreover, on $\widetilde\Omega$ we use the topology
generated by the {\it cylinders}
\begin{align*}
[(\omega,N)_m]=\{(\hat\omega,\hat N)\in\widetilde\Omega &\mid
  \hat N_i=N_i\text{ for all }i\le m\text{ and }\hat\omega(\tau)=\omega(\tau)\\
&\text{ for all }\tau\text{ with } |\tau|<N_m\}.
\end{align*}
For any function $\phi$ of $\omega$ we use the notation $\phi(\tilde\omega)$ to
view $\phi$ as a function of $\tilde\omega$. Finally for 
all $n<m\in\mathbb N\cup\{0\}$, let
\[
\Sigma_*^{\tilde\omega}(n,m)=\{i_{N_n+1}\cdots i_{N_m}\mid\bi_{N_n}i_{N_n+1}\cdots
  i_{N_m}\in\Sigma_*^{\tilde\omega}\},
\]
where $N_0=0$.

For the purpose of defining the pressure, we proceed by recalling the notation
from \cite{F88}. Let $T:\mathbb R^d\to\mathbb R^d$ be a non-singular linear 
mapping and let
\[
0<\sigma_d\leq\sigma_{d-1}\leq\dots\leq\sigma_2\leq\sigma_1=\Vert T\Vert
\]
be the singular values of $T$, that is, the lengths of the semi-axes of
the ellipsoid $T(B(0,1))$, where $B(x,\rho)\subset\mathbb R^d$ is the closed
ball with radius $\rho>0$ centred at $x\in\mathbb R^d$.
We define the {\it singular value function} by
\[
\Phi^s(T)
 =\begin{cases}\sigma_1\sigma_2\cdots\sigma_{m-1}\sigma_m^{s-m+1},&
                \text{if } 0\le s\le d,\\
   \sigma_1\sigma_2\cdots\sigma_{d-1}\sigma_d^{s-d+1},&\text{if }s>d,
  \end{cases}
\]
where $m$ is the integer such that $m-1\le s<m$.  The singular value function
is submultiplicative, that is,
\[
\Phi^s(TU)\le\Phi^s(T)\Phi^s(U)
\]
for all linear maps $T,U:\mathbb R^d\to\mathbb R^d$. For further properties of
the singular value function see for example \cite{F88}. We assume that there
exist $\underline\sigma,\overline\sigma\in (0,1)$ such that
\[
0<\underline\sigma\le\sigma_d(T_i^\lambda)\le\sigma_1(T_i^\lambda)
  \le\overline\sigma<1
\]
for all $\lambda\in\Lambda$ and for all $i=1,\dots,M_\lambda$. Note that,
whilst the condition $\overline\sigma<1$ follows from the uniform
contractivity assumption, the existence of $\underline\sigma>0$ is an
additional assumption.

For all $k\in\mathbb N$ and $s\ge 0$,
let
\[
S^{\tilde\omega}(k,s)=\sum_{\bi_k\in\Sigma_*^{\tilde\omega}}
  \Phi^s(T_{\bi_k}^{\tilde\omega}).
\]
The {\it pressure} is defined as follows
\begin{equation}\label{pressure}
p^{\tilde\omega}(s)=\lim_{k\to\infty}\frac{\log S^{\tilde\omega}(k,s)}k
\end{equation}
provided that the limit exists. Since $T\mapsto\Phi^s(T)$ is 
a continuous 
function, the map $\tilde\omega\mapsto p^{\tilde\omega}(s)$ is Borel measurable.

According to the following theorem, the pressure exists and has a unique zero
for typical random affine code tree fractals.

\begin{theorem}\label{pexists}
Assume that $P$ is an ergodic $\Xi$-invariant Borel 
probability measure on
$\widetilde\Omega$ such that
$\int_{\widetilde\Omega}N_1(\tilde\omega)\,dP(\tilde\omega)<\infty$.
Then for $P$-almost all $\tilde\omega\in\widetilde\Omega$ the pressure
$p^{\tilde\omega}(s)$ exists for all $s\in[0,\infty[$.
Furthermore, $p^{\tilde\omega}$ is strictly decreasing and there
exists a unique $s_0$ such that $p^{\tilde\omega}(s_0)=0$ for
$P$-almost all $\tilde\omega\in\widetilde\Omega$.
\end{theorem}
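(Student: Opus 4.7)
The plan is to realize the pressure as a Kingman limit along the neck shift $\Xi$ and then interpolate between neck levels. Put $a_n(\tilde\omega):=\log S^{\tilde\omega}(N_n(\tilde\omega),s)$. The neck property says that any $\bi\in\Sigma_*^{\tilde\omega}$ of length $N_{m+n}(\tilde\omega)$ factors uniquely as a concatenation $\bi=\bj\bk$ with $|\bj|=N_m(\tilde\omega)$ and $\bk$ of length $N_n(\Xi^m\tilde\omega)=N_{m+n}(\tilde\omega)-N_m(\tilde\omega)$ lying in $\Sigma_*^{\Xi^m\tilde\omega}$, with $T_{\bi}^{\tilde\omega}=T_{\bj}^{\tilde\omega}\,T_{\bk}^{\Xi^m\tilde\omega}$. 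Submultiplicativity of $\Phi^s$ then gives
\[
S^{\tilde\omega}(N_{m+n}(\tilde\omega),s)\le S^{\tilde\omega}(N_m(\tilde\omega),s)\,S^{\Xi^m\tilde\omega}(N_n(\Xi^m\tilde\omega),s),
\]
which is precisely the cocycle subadditivity $a_{m+n}(\tilde\omega)\le a_m(\tilde\omega)+a_n(\Xi^m\tilde\omega)$ required by Kingman's subadditive ergodic theorem.

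For integrability of $a_1$, the standing assumption $\underline\sigma\le\sigma_d(T_i^\lambda)\le\sigma_1(T_i^\lambda)\le\overline\sigma$ together with $\sigma_d(TU)\ge\sigma_d(T)\sigma_d(U)$ and $\sigma_1(TU)\le\sigma_1(T)\sigma_1(U)$ yields $\underline\sigma^{s|\bi|}\le\Phi^s(T_{\bi}^{\tilde\omega})\le\overline\sigma^{s|\bi|}$ for $0\le s\le d$ (the case $s>d$ being analogous). Combined with $1\le\#\Sigma_*^{\tilde\omega}(0,1)\le M^{N_1(\tilde\omega)}$ this gives $|a_1(\tilde\omega)|\le C(s)\,N_1(\tilde\omega)\in L^1(P)$. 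Kingman plus the ergodicity of $\Xi$ then produce a constant $L(s)$ with $a_n/n\to L(s)$ $P$-a.s., while Birkhoff applied to $N_1$ gives $N_n/n\to\bar N:=\int N_1\,dP$ $P$-a.s., so that on a full-measure set
\[
\frac{\log S^{\tilde\omega}(N_n(\tilde\omega),s)}{N_n(\tilde\omega)}\longrightarrow p(s):=\frac{L(s)}{\bar N}.
\]

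To pass from neck times to arbitrary $k$, for $N_m\le k<N_{m+1}$ the same singular-value bound together with $\#\Sigma_*^{\tilde\omega}(m,m+1)\le M^{N_{m+1}-N_m}$ gives $|\log S^{\tilde\omega}(k,s)-\log S^{\tilde\omega}(N_m(\tilde\omega),s)|\le C(s)(N_{m+1}-N_m)$. Since $(N_{m+1}-N_m)/N_m\to 0$ $P$-a.s.\ (as $N_n/n$ has a finite limit), one concludes $\log S^{\tilde\omega}(k,s)/k\to p(s)$ along every $k\in\mathbb N$. Running this argument on a countable dense set $Q\subset[0,\infty)$ and invoking the uniform Lipschitz estimate $|\log\Phi^t(T_{\bi_k}^{\tilde\omega})-\log\Phi^s(T_{\bi_k}^{\tilde\omega})|\le k|t-s||\log\underline\sigma|$ promotes the convergence to a single full-measure set of $\tilde\omega$ valid for every $s\in[0,\infty)$.

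Strict decrease and uniqueness of the zero follow from $\Phi^t(T)\le\|T\|^{t-s}\Phi^s(T)$ for $t>s$, which combined with $\|T_{\bi_k}^{\tilde\omega}\|\le\overline\sigma^k$ yields $S^{\tilde\omega}(k,t)\le\overline\sigma^{k(t-s)}S^{\tilde\omega}(k,s)$ and hence $p(t)\le p(s)+(t-s)\log\overline\sigma<p(s)$. Since $\#\Sigma_*^{\tilde\omega}(k)\ge 1$ forces $p(0)\ge 0$, while $S^{\tilde\omega}(k,s)\le M^k\overline\sigma^{ks}$ drives $p(s)\le\log M+s\log\overline\sigma\to-\infty$, continuity of the strictly decreasing function $s\mapsto p(s)$ produces a unique $s_0$ with $p(s_0)=0$. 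The main obstacle is to secure enough integrability and uniformity in $s$ for Kingman to apply cleanly; once these are in hand, the neck structure reduces everything to a transparent mix of subadditive and additive ergodic theory.
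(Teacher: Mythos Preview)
The paper does not prove this theorem itself; its proof consists solely of the citation ``See \cite[Theorem 4.3]{JJKKSS}.'' Your argument is a correct, self-contained proof, and it is exactly the approach one expects (and almost certainly the one in the cited reference): realise $\log S^{\tilde\omega}(N_n,s)$ as a subadditive cocycle over the neck shift $\Xi$, apply Kingman together with Birkhoff for $N_n/n$, then interpolate between consecutive neck levels using the uniform singular-value bounds, and finally pass to all $s$ simultaneously via the equi-Lipschitz estimate in $s$.

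Two small points worth making explicit. First, Kingman with $a_1\in L^1(P)$ does not by itself rule out $L(s)=-\infty$; you should note that your lower bound $a_n(\tilde\omega)\ge sN_n(\tilde\omega)\log\underline\sigma$ combined with $N_n/n\to\bar N$ forces $L(s)\ge s\bar N\log\underline\sigma>-\infty$, so $p(s)$ is a genuine real number. Second, in the interpolation step you use that for $N_m\le k<N_{m+1}$ the words of length $k$ factor through the common sub-tree at level $N_m$; this is precisely the neck property and is what makes the two-sided estimate $\underline\sigma^{s(k-N_m)}S^{\tilde\omega}(N_m,s)\le S^{\tilde\omega}(k,s)\le M^{k-N_m}S^{\tilde\omega}(N_m,s)$ hold uniformly, so it is worth saying so explicitly. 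With these clarifications your proof is complete and matches the intended strategy.
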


\begin{proof}
See \cite[Theorem 4.3]{JJKKSS}.
\end{proof}

In \cite[Remark 2.1]{JJKKSS} it was shown that any compact subset 
of the attractor 
of an iterated function system is a code tree fractal and, in particular, any 
sub-self-affine set is a code tree fractal. While verifying this, one ends up 
studying subsystems of the original iterated function system. 
For example, suppose that $F^1=\{f_1,f_2,f_3\}$ and let
$F^2=\{f_1,f_2\}$ and $F^3=\{f_2,f_3\}$. When changing the 
translation vector of the second map in $F^2$, one needs to modify also the 
translation vector of the first map in $F^3$ since these maps are the same.
Therefore, it is useful to allow identifications of translation vectors 
between different families. For this purpose, we 
equip the set
$\widehat\Lambda=\{(\lambda,i)\mid\lambda\in\Lambda, i=1,\dots,M_\lambda\}$
with an equivalence relation $\sim$ satisfying the following assumptions
\begin{itemize}
\item the cardinality $\mathcal A$ of the set of equivalence classes
      $\ba:=\widehat\Lambda/\sim$ is finite,
\item for every $\lambda\in\Lambda$ we have $(\lambda,i)\sim (\lambda,j)$ if
      and only if $i=j$ and
\item the equivalence classes, regarded as subsets of $\Lambda$, 
are Borel sets.
\end{itemize}
The notation $\ba$ for the set of equivalence classes refers to the fact
that some translation vectors of the maps $f_i^\lambda$ are identified even
though the maps are not. The second condition means that different 
translation
vectors inside a system $F^\lambda$ are never identified. The first condition
allows us to view the set of equivalence classes $\ba$ as an element
of $\mathbb R^{d\mathcal A}$. From now on we will write $A_\ba^{\tilde\omega}$ for
the attractor of a code tree $\tilde\omega$ to emphasise that it depends on
the set of equivalence classes of translation vectors $\ba$.

Now we are ready to state our main theorem in this section.
Generalising the earlier results in \cite{JJKKSS}, we prove that, under
the assumptions of Theorem~\ref{pexists}, for random affine code
tree fractals the Hausdorff, packing and box counting dimensions, denoted
by $\dimH$, $\dimp$ and $\dimb$, respectively, are almost surely
equal to the unique zero of the pressure provided that a probabilistic
version of the F-S condition is satisfied. We denote by $s_0$ the
unique zero of the pressure given by Theorem \ref{pexists}.

\begin{theorem}\label{maintheorem}
Assume that $0<\underline\sigma\le\overline\sigma<\frac 12$.
Let $P$ be an ergodic $\Xi$-invariant Borel
probability measure on $\widetilde\Omega$ such that
$\int_{\widetilde\Omega}N_1(\tilde\omega)\,dP(\tilde\omega)<\infty$.
Suppose that for all $0<s<d$
\begin{equation}\label{probcondcs}
P\{\tilde{\omega}\in\widetilde\Omega\mid\{T_\bj^{\tilde{\omega}}\mid\bj
  =\bi_l, 1\le l\le N_1\text{ and }\bi_{N_1}\in\Sigma_*^{\tilde\omega}(0,1)\}
  \text{ satisfies condition }C(s)\}>0.
\end{equation}
Then for $P$-almost all $\tilde\omega\in\widetilde\Omega$,
\[
\dimH(A_\ba^{\tilde\omega})=\dimp(A_\ba^{\tilde\omega})=\dimb(A_\ba^{\tilde\omega})
   =\min\{s_0,d\}
\]
for $\mathcal{L}^{d\mathcal{A}}$-almost all $\ba\in\mathbb R^{d\mathcal A}$.
\end{theorem}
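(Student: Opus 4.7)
The proof will establish the upper and lower dimension bounds separately; the upper bound $\dimb(A_\ba^{\tilde\omega})\le\min\{s_0,d\}$ is the easier direction, following from Theorem~\ref{pexists} by a standard covering argument: for any $s>s_0$ one has $p^{\tilde\omega}(s)<0$, so $S^{\tilde\omega}(k,s)\to 0$, and the natural cover of $A_\ba^{\tilde\omega}$ by the images $f_{\bi_k}^{\tilde\omega}(B(0,R))$ at a level $k$ chosen so that all pieces have comparable diameter gives $\mathcal H^s(A_\ba^{\tilde\omega})=0$. This requires only submultiplicativity of $\Phi^s$ and uses neither the F-S condition nor the restriction $\overline\sigma<1/2$; it was essentially carried out in \cite{JJKKSS}.

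The substantive content is the lower bound $\dimH(A_\ba^{\tilde\omega})\ge\min\{s_0,d\}$ for $\mathcal L^{d\mathcal A}$-almost every $\ba$. I would use the potential-theoretic approach pioneered by Falconer in \cite{F88}: fix $0<t<\min\{s_0,d\}$, place on $\Sigma^{\tilde\omega}$ a Bernoulli-type measure $\mu$ with $\mu([\bi_k])\asymp\Phi^t(T_{\bi_k}^{\tilde\omega})/S^{\tilde\omega}(k,t)$, push it forward to $A_\ba^{\tilde\omega}$ by $Z_\ba^{\tilde\omega}$, and bound its $t$-energy in expectation over the random translations $\ba$. The condition $\overline\sigma<1/2$ ensures the transversality estimate
\[
\int_{\mathbb R^{d\mathcal A}}|Z_\ba^{\tilde\omega}(\bi)-Z_\ba^{\tilde\omega}(\bj)|^{-t}\,d\mathcal L^{d\mathcal A}(\ba)\;\le\;C\,\Phi^t(T_{\bi\wedge\bj}^{\tilde\omega})^{-1},
\]
so after integrating out $\ba$ the expected $t$-energy is controlled by a sum $\sum_k\sum_{\bi_k}\Phi^t(T_{\bi_k}^{\tilde\omega})\,(\text{growth factor})$, and it suffices to show that this double sum is finite for $P$-almost every $\tilde\omega$.

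Here is where the probabilistic condition \eqref{probcondcs} enters. The main technical step is the Falconer-Sloan quasi-multiplicativity principle \cite{FS}: if a family $\mathcal T$ of linear maps satisfies $C(t)$, then there exist $c=c(\mathcal T,t)>0$ and $L=L(\mathcal T)$ such that for all linear $A,B$ there is $T\in\mathcal T$ with
\[
\Phi^t(A\,T\,B)\;\ge\;c\,\Phi^t(A)\,\Phi^t(B).
\]
By \eqref{probcondcs}, the set $E_t$ of $\tilde\omega$ whose first neck-block family satisfies $C(t)$ has $P(E_t)>0$. Since $\Xi$ is ergodic, Birkhoff's theorem gives $P$-a.e.\ $\tilde\omega$ a positive density of neck indices $m$ at which $\Xi^m\tilde\omega\in E_t$. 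Concatenating quasi-multiplicativity across these good neck blocks and using the homogeneity of the code tree at neck levels (all sub code trees rooted at level $N_m$ are identical), one upgrades $\Phi^t$ to a genuinely supermultiplicative cocycle up to a multiplicative constant per good block. This yields
\[
S^{\tilde\omega}(k,t)\;\ge\;c_k\,e^{k p^{\tilde\omega}(t)}
\]
with $c_k$ subexponential, together with the matching upper bound from submultiplicativity; consequently the expected energy estimate reduces to convergence of $\sum_k e^{-k(p^{\tilde\omega}(t)-\varepsilon)}$, which holds because $p^{\tilde\omega}(t)>p^{\tilde\omega}(s_0)=0$.

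The main obstacle will be the last step: turning the \emph{positive probability} F-S condition at a single neck block into a uniform, scale-independent quasi-multiplicativity that can be spliced into arbitrary products $T_{\bi_k}^{\tilde\omega}$ at the right places. This requires carefully exploiting the neck structure (so that good blocks can be inserted without disturbing the combinatorics of the tree), controlling the uniform constant $c$ across the finitely many equivalence classes of block families that arise under $P$, and combining this with a Borel-Cantelli/ergodic averaging to get an almost sure estimate. Once the lower bound for $\dimH$ is obtained, the equalities $\dimH=\dimp=\dimb$ follow because the upper bound applies to $\dimb$ and $\dimH\le\dimp\le\dimb$.
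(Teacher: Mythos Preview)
Your proposal is correct and follows essentially the same route as the paper: the upper bound is taken from \cite{JJKKSS}, the lower bound is obtained via Falconer's potential-theoretic method, and the key input is F--S quasi-multiplicativity combined with Birkhoff's ergodic theorem to locate good neck blocks where the singular value function becomes effectively supermultiplicative. The paper's execution differs only in packaging: it constructs the measure as an explicit weak$^*$ limit of the discrete measures $\mu_m^{\tilde\omega}$ in~\eqref{muomega} (your ``Bernoulli-type'' measure with $\mu([\bi_k])\asymp\Phi^t(T_{\bi_k}^{\tilde\omega})/S^{\tilde\omega}(k,t)$ does not exist literally since $\Phi^t$ is not multiplicative); it uses the summed $(c,s)$-\emph{full} form of quasi-multiplicativity, $\sum_j\Phi^s(US_jV)\ge c\,\Phi^s(U)\Phi^s(V)$ (Lemma~\ref{fullok}); and rather than concatenating across a positive density of good blocks it shows that a \emph{single} good block in $(n,n+\lceil\varepsilon n\rceil]$ (Lemma~\ref{numbersatcondition}) already suffices to cancel the tail sums in $\mu_{n+m}^{\tilde\omega}([\bi_l])$ and yield the one-sided Gibbs bound $\mu^{\tilde\omega}([\bi_l])\le D(\tilde\omega)\Phi^s(T_{\bi_l}^{\tilde\omega})$, after which \cite[Theorem~3.2]{JJKKSS} is invoked as a black box for the transversality/energy step.
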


\begin{remark}\label{oldthm}
a) In \cite[Theorem 5.1]{JJKKSS} a special case of Theorem~\ref{maintheorem}
was proven under substantially stronger assumptions. First of all,
\cite[Theorem 5.1]{JJKKSS} deals only with the planar case $d=2$.
Moreover, instead of \eqref{probcondcs} the following non-existence of 
parallelly mapped vectors is assumed 
\begin{equation}\label{parallel}
\begin{split}
P\{\tilde{\omega}\in\widetilde\Omega\mid&\text{ there exists }v\in\mathbb R^2
    \setminus\{0\}\text{ such that }T_{\bi_{N_1}}^{\tilde{\omega}}(v)
  \text{ are parallel}\\
  &\text{ for all }\bi_{N_1}\in\Sigma_*^{\tilde\omega}(0,1)\}<1.
\end{split}
\end{equation}
Observe that in the case $d=2$ the condition $C(s)$ is equivalent to the
condition $C(1)$ for all $0<s<2$. Furthermore, for a family $\{S_i\}_{i=1}^k$
condition $C(1)$ means that for all vectors
$v,w\in\mathbb R^2\setminus\{0\}$ there exists $i$ such that
$\langle S_iv\mid w\rangle\ne 0$. Therefore, condition \eqref{parallel} implies
condition \eqref{probcondcs} in the case $d=2$. Condition \eqref{probcondcs}
is weaker than condition \eqref{parallel}, since in the former one all iterates
up to level $N_1$ are considered whilst in the second one only iterates at
level $N_1$ play a role. In \cite[Theorem 5.1]{JJKKSS} there
are also technical conditions concerning the measure $P$ which are not
needed here. As explained in \cite{JJKKSS} the upper bound $\frac 12$ for
$\overline\sigma$ is optimal in Theorem~\ref{maintheorem}.

b) The map $N_1(\tilde\omega)$ is Borel measurable 
as a projection. 
Since 
$\tilde\omega\mapsto T_\bj^{\tilde\omega}$ is a Borel map for all finite words
$\bj$ and the set of families of linear maps satisfying condition $C(s)$ is
open, the set in \eqref{probcondcs} is a Borel set.
\end{remark}

Before the proof of Theorem~\ref{maintheorem} we present an example which
demonstrates how certain random $V$-variable and random graph directed
systems fit in our framework.    

\begin{example}\label{graphexample}
Let $\Lambda$ be a finite set of directed labeled multigraphs
$\lambda=(W, E^\lambda,\mathcal F^\lambda)$ where
$W=\{1,2,...,V\}$ is the common finite set of vertices for all
$\lambda\in\Lambda$, $E^\lambda$ is a finite set of directed edges
and, for each directed edge $e\in E^\lambda$, there is an
associated map $\phi_e^\lambda\in\mathcal F^\lambda $ which is a contraction on
$\mathbb R^d$. For all edges $e$, we denote by $i(e)$ and $t(e)$ 
the initial and terminal vertices of $e$, respectively. 

Recall that in the general setting of
graph directed systems (see for example \cite{MU03}), for each vertex $v\in W$,
there is an associated metric space $X_v$, and for each edge $e\in E^\lambda$,
the associated map is $\phi_e^\lambda:X_{t(e)}\to X_{i(e)}$. Here we 
make the simplifying assumption that $X_v=\mathbb R^d$ for all $v\in V$. Let
\[
M=\max_{\substack{v\in W\\\lambda\in\Lambda}}\#\{e\in E^\lambda\mid i(e)=v\}
\]
be the maximum
number of maps within any fixed graph $\lambda\in\Lambda$ with the same range.
Recall that in a deterministic graph directed system there is only one graph
$\lambda$ and the composition $\phi_{e_1}\circ\phi_{e_2}$ is allowed provided that
$t(e_1)=i(e_2)$. In some random graph directed models (see for example 
\cite{RU}) the graph $\lambda$ is fixed and the maps $\phi_e$ are random 
whereas in our model the graphs are allowed to be random as well.  

Fix a probability measure  $\mu$ on $\Lambda$ and set 
$\mathcal G=\Lambda^{\{0\}\cup\mathbb N}$. Let 
$\mu^\infty=\mu^{\{0\}\cup\mathbb N}$ be the product measure on $\mathcal G$ and 
let $\sigma:\mathcal G\to\mathcal G$,
\[
\sigma(g_0g_1\cdots)=g_1g_2\cdots\text{ for all }\bg=g_0g_1\cdots\in\mathcal G,
\]
be the left shift. To all $\bg\in\mathcal G$, we associate
a $V$-tuple of code trees $\omega=(\omega_1,...,\omega_V) $ as follows:
For all $\lambda\in\Lambda$ and $v\in \{1,2,...,V\}$, let
$\mathcal F_v^\lambda=\{\phi_e^\lambda\mid  e\in E^\lambda\text{ and }i(e)=v\}$
be the iterated function system consisting of those maps in $\lambda$ whose
ranges correspond to the vertex $v$. We write $I=\{1,\dots,M\}$ and rename the 
edges with $i(e)=v$ as $e_1,\dots,e_m$. Observe that $m$ may 
depend on $v\in W$ and $\lambda\in\Lambda$. The definition of $M$ implies that
$m\le M$. For all $v\in W$, set
$\omega_v(\emptyset)=\mathcal F_v^{g_0}$. Now we proceed inductively. 
Assuming that
$\omega_v(i_1\cdots i_n)=\mathcal F_w^{g_n}
  =\{\phi_{e_1}^{g_n},\ldots,\phi_{e_m}^{g_n}\}$ 
for some $w\in W$, define 
$\omega_v(i_1\cdots i_ni_{n+1})=\mathcal F_{t(e_{i_{n+1}})}^{g_{n+1}}$ for
$i_{n+1}=1,\dots,m$. Observe that every $\bg\in\mathcal G$ defines a sequence of
graphs, which, in turn, determines a sequence of ordered walks
starting from $v$. The code tree fractal corresponding to $\omega_v$ is the 
set of the limit points of the set of maps associated to all infinite paths  
starting from $v$. This code tree fractal is the $v$-th component in the graph 
directed set corresponding to the infinite sequence $\bg$.

A $V$-tuple $\omega$ of code trees defines a $V$-tuple of code tree 
fractals $\bar A^\omega=(A_1^\omega,\dots,A_V^\omega)$ componentwise as 
described at the beginning of this section. Note that for fixed 
$\bg\in\mathcal G$, any sub code tree rooted at level $n$ is determined by the 
code of its top node. Since this code is an element of the set 
$\{\mathcal F_k^{g_n}\}_{k=1}^V$, there are at most $V$ distinct code trees at 
a fixed level. By definition, this means that 
$\omega=(\omega_1,\dots,\omega_V)$ and the corresponding code tree fractals, 
$\{A_v^\omega\mid v\in W\}$, are $V$-variable.

In order to apply Theorem~\ref{maintheorem} to the above system, we need
some further assumptions. Suppose that 
$\phi_e^\lambda(x)=T^\lambda_e(x)+a_e^\lambda$ is a non-singular affine map on 
$\mathbb R^d$ with singular values uniformly bounded from below by 
$\underline\sigma>0$ and from above by $\overline\sigma <\frac 12$ for all
$\lambda\in\Lambda$ and $e\in E^\lambda$. We equip the set 
$\widehat\Lambda=\{(\lambda,e)\mid\lambda\in\Lambda\text{ and }e\in E^\lambda\}$
with the trivial equivalence relation, that is,  
$(\lambda,e)\sim(\lambda',e')$ if $(\lambda,e)=(\lambda',e')$. Then the
set of equivalence classes $\ba=\widehat\Lambda/\sim$ may be
identified with the collection of all translation vectors. 
Since $\Lambda$ is finite and the number of 
edges is bounded, the number $\mathcal A$ of equivalence classes 
in $\ba$ is finite, and therefore, $\ba\in\mathbb R^{d\mathcal A}$. 
To ensure that the $V$-tuple of code trees corresponding to 
$\bg$ has no ``dying'' branches and, in particular, defines a 
$V$-tuple of non-empty code tree fractals,
we assume that in $\mu$-almost all graphs $\lambda\in\Lambda$ every vertex 
is an initial vertex of some edge, that is,  
\[
\mu\{\lambda\in\Lambda\mid\text{ for all }v\in W\text{ there exists }
  e\in E^\lambda\text{ with }i(e)=v\}=1.
\]

In addition to the above assumptions, the existence of neck levels needs to
be guaranteed. Recall that at a neck level all the sub code trees are 
identical. Such levels exist provided that there is a vertex $v_0\in W$ such 
that the $\mu$-measure of the set of graphs $\lambda\in\Lambda$ 
whose all edges have terminal vertex equal to $v_0$ is positive. Hence, 
we assume that there exists a vertex $v_0\in W$ such that
$\mu(\Lambda_{\text{neck}})>0$ where 
\[
\Lambda_{\text{neck}}=\{\lambda\in\Lambda\mid t(e)=v_0\text{ for all }
  e\in E^\lambda\}.
\]
We emphasise that this is a natural assumption for a collection of random 
graphs.  For example, it is satisfied if the random graphs are 
constructed as follows: First choose for each $v\in W$ the number of edges with 
initial vertex equal to $v$. Then for each edge choose the terminal vertex 
independently according to a probability vector $(p_1,\dots,p_V)$ with 
$p_{v_0}>0$.  We first define auxiliary neck levels inductively as follows: Set
\[
\tilde N_1(\bg)=\min\{n\ge 0\mid t(e)=v_0\text{ for all }e\in E^{g_n}\}+1 
\]
and define  
\[
\tilde N_{k+1}(\bg)=\min\{n\ge \tilde N_k(\bg)\mid t(e)=v_0\text{ for all }e\in 
 E^{g_n}\}+1. 
\]
This sequence is well defined for $\mu^\infty$-almost all $\bg\in\mathcal G$ 
since the distances $\tilde N_{k+1}-\tilde N_k$
form a sequence of independent geometrically distributed random variables, and 
therefore, for the expectation we have 
\begin{equation}\label{Ebounded}
\int\tilde N_k(\bg)\,d\mu^\infty(\bg)
  =k\int\tilde N_1(\bg)\,d\mu^\infty(\bg)<\infty
\end{equation}
for all $k\in\mathbb N$. The neck list is defined by $N_k=\tilde N_{2n_0^2k}$ 
for all $k\in\mathbb N$, where $n_0$ is as in Theorem~\ref{CsCm}.

Observe that the existence of neck levels implies that $A_{v_0}^\omega$ is 
a finite union of affine
copies of the attractor determined by the common sub code tree at the first
neck level $N_1$. Since all the sub code trees at this level are identical,
all the components of the $V$-tuble attractor $\bar A^\omega$ are
finite unions of affine images of the same fixed set. Thus the dimensions 
of the components of $\bar A^\omega$ are equal to that of $A_{v_0}^\omega$. 
For the purpose of calculating the almost sure dimension value of
$A_{v_0}^\omega$, we apply Theorem~\ref{maintheorem}. 

We proceed by verifying that the assumptions of Theorem~\ref{maintheorem} are
satisfied. Since we
attached to almost every code tree $\omega_{v_0}$ a unique neck list, we 
may identify $\widetilde\Omega$ with the space of all code trees $\omega_{v_0}$.
Moreover, the product measure $\mu^\infty$ determines a mixing, thereby ergodic,
$\Xi$-invariant measure $P$ on $\widetilde\Omega$. Now \eqref{Ebounded} and the
definition of $N_1$ imply that 
$\int N_1(\tilde\omega_{v_0})\,d P(\tilde\omega_{v_0})<\infty$.  

Finally, we have to ensure that the F-S condition \eqref{probcondcs} is valid.
Intuitively, this is achieved if we assume that there are many 
allowed sequences of edges with 
initial and terminal vertices equal to $v_0$ such that the associated maps 
satisfy the assumptions of Theorem~\ref{CsCm}. More precisely, we suppose
that there exists $l\in\mathbb N$ such that
\begin{equation}\label{nicemaps}
\begin{split}
\mu^l\{&(\lambda_1,\dots,\lambda_l)\in\Lambda^l\mid\lambda_j\not\in
  \Lambda_{\text{preneck}}\text{ for }j=1,\dots,l-1,\,\lambda_l\in
  \Lambda_{\text{preneck}},\text{ there exist }\\
 &e_{i_1}^{\lambda_1}\cdots e_{i_l}^{\lambda_l}\text{ and }e_{j_1}^{\lambda_1}\cdots 
  e_{j_l}^{\lambda_l}\text{ with }i(e_{i_1}^{\lambda_1})=i(e_{j_1}^{\lambda_1})
  =t(e_{i_l}^{\lambda_l})=t(e_{j_l}^{\lambda_l})=v_0\text{ and}\\
 &F:=T_{e_{i_1}}^{\lambda_1}\cdots T_{e_{i_l}}^{\lambda_l}\text{ and }
  G:=T_{e_{j_1}}^{\lambda_1}\cdots T_{e_{j_l}}^{\lambda_l}
  \text{ satisfy the assumptions of Theorem }\ref{CsCm}\}\\
 &>0. 
\end{split}
\end{equation}  
Since we use the product measure $\mu^\infty$ on $\mathcal G$, 
there is positive
probability that the same pair of maps $(F,G)$ appears successively $2n_0^2$ 
times. Therefore, from Theorem~\ref{CsCm} we see that the condition 
\eqref{probcondcs} is satisfied. 
Observe that the condition \eqref{nicemaps} is satisfied with $l=1$ if there 
are maps $\phi_e^\lambda$ and $\phi_{e'}^\lambda$ as in Theorem~\ref{CsCm} with
$i(e)=i(e')=t(e)=t(e')=v_0$ and $\lambda\in\Lambda_{\text{preneck}}$ is chosen
with positive probability. This, in 
turn, is true for typical families by Corollary~\ref{typical}. 
\end{example}

For the proof of Theorem~\ref{maintheorem} we need the following notation and
auxiliary results.

\begin{definition}\label{full}
Let $c>0$ and $0<s<d$. We say that a family of non-singular linear mappings
$\{S_j:\mathbb R^d\to\mathbb R^d\}_{j=1}^k$ is {\it $(c,s)$-full} if
\[
\sum_{j=1}^k\Phi^s(US_jV)\ge c\Phi^s(U)\Phi^s(V)
\]
for all non-singular linear mappings $U,V:\mathbb R^d\to\mathbb R^d$.
\end{definition}

In Lemmas \ref{fullok} and \ref{numbersatcondition} we explore consequences
of the probabilistic version of the F-S condition \eqref{probcondcs}.

\begin{lemma}\label{fullok}
Assuming that the condition \eqref{probcondcs} is satisfied, there exists $c>0$
such that
\[
\varrho=P\{\tilde\omega\in\widetilde\Omega|\{T_{\bi_{N_1}}^{\tilde\omega}\}_
  {\bi_{N_1}\in\Sigma_*^{\tilde\omega}(0,1)}\text{ is }(c,s)\text{-full }\}>0.
\]
\end{lemma}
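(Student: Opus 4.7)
The plan is to split the argument into (i) showing that any individual family of non-singular linear maps satisfying $C(s)$ is automatically $(c,s)$-full for some positive $c$ depending on the family, and (ii) extracting a uniform lower bound for $c$ on a set of positive $P$-measure.

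For step (i), I would exploit the factorisation
\[
\Phi^s(T)=\Vert T|_{\Lambda^m}\Vert^{1-t}\,\Vert T|_{\Lambda^{m+1}}\Vert^{t},
\]
where $m=\lfloor s\rfloor$ and $t=s-m\in(0,1)$ and each operator norm is realised at a decomposable unit vector in $\Lambda_0^k$ (namely the wedge of the top right singular vectors of $T$). Crucially, the optimisers at consecutive levels can be chosen as a nested pair $(\bv_0,\bv_0\wedge v_0)$. Applying the same representation to $\Phi^s(US_jV)$ and bounding $\Vert T\bv\Vert\ge|\langle T\bv\mid\bw\rangle|$ for any unit $\bw$, with $\bw_0,\bw_0\wedge w_0$ the nested pair dictated by the left singular structure of $U$, one obtains
\[
\sum_j\Phi^s(US_jV)\ge\Phi^s(U)\Phi^s(V)\cdot\max_j\bigl(|\langle S_j\bv_0\mid\bw_0\rangle|^{1-t}\,|\langle S_j(\bv_0\wedge v_0)\mid\bw_0\wedge w_0\rangle|^{t}\bigr).
\]
Condition $C(s)$ forces the maximum on the right to be strictly positive for every choice of nested decomposable unit pairs, and compactness of the set of such pairs (inside the unit spheres of $\Lambda^m$ and $\Lambda^{m+1}$) together with continuity of the maximum delivers a uniform lower bound $c(\mathcal S)>0$.

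For step (ii), set
\[
E_n=\{\tilde\omega\in\widetilde\Omega\mid\{T_{\bi_{N_1}}^{\tilde\omega}\}_{\bi_{N_1}\in\Sigma_*^{\tilde\omega}(0,1)}\text{ is }(1/n,s)\text{-full}\}.
\]
Every $\tilde\omega$ whose top-level family satisfies $C(s)$ lies in some $E_n$ by step (i), so the event of \eqref{probcondcs}—once reduced to a statement about the level-$N_1$ family—is contained in $\bigcup_n E_n$; countable subadditivity of $P$ then forces $P(E_{n_0})>0$ for some $n_0$, and the choice $c=1/n_0$, $\varrho=P(E_{n_0})$ finishes the argument.

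I expect two obstacles. The substantive one is ensuring that the compactness argument in step (i) runs correctly on the non-linear Plücker variety $\Lambda_0^m$ of decomposable vectors (already flagged in Remark~\ref{FSremark}), so that the infimum over nested decomposable unit pairs is actually attained and strictly positive rather than merely positive. The second is bookkeeping: \eqref{probcondcs} asserts $C(s)$ for the entire initial-segment family $\{T_\bj^{\tilde\omega}:1\le|\bj|\le N_1\}$, whereas the conclusion concerns only the level-$N_1$ sub-family. Bridging this gap requires arguing that $C(s)$ of the larger family descends to the top-level sub-family, which one would do via the composition relation $T_{\bi_{N_1}}^{\tilde\omega}=T_{\bi_l}^{\tilde\omega}\cdot T_{\text{rest}}^{\tilde\omega}$ combined with the non-singularity of the individual factors $T_i^\lambda$.
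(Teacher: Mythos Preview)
Your step~(i) and the countable-subadditivity half of step~(ii) are correct and coincide with the paper's argument: the paper cites \cite[Proposition~2.1 and Corollary~2.2]{FS} for (i) as a black box, and finishes with precisely your exhaustion $P\bigl(\bigcup_n E_n\bigr)>0\Rightarrow P(E_{n_0})>0$.

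The obstacle you flag is real, but your proposed resolution does not work. Condition $C(s)$ does \emph{not} in general descend from the initial-segment family $\mathcal F=\{T_\bj^{\tilde\omega}:1\le|\bj|\le N_1\}$ to the level-$N_1$ subfamily, and the composition identity plus non-singularity of the tails is insufficient: in $\langle T_{\bi_{N_1}}\bv\mid\bw\rangle=\langle T_{\bi_l}(T_{\mathrm{rest}}\bv)\mid\bw\rangle$, non-singularity ensures $T_{\mathrm{rest}}\bv\ne 0$ but gives no control over its direction. A concrete planar failure with $N_1=2$: take $T_1=\mathrm{diag}(0.4,0.2)$, $T_2=\mathrm{diag}(0.2,0.4)$ at the root and the single map $S=0.3\bigl(\begin{smallmatrix}0&1\\1&0\end{smallmatrix}\bigr)$ at each child. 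The full family $\{T_1,T_2,T_1S,T_2S\}$ satisfies $C(1)$ (at $v=e_1$ the level-$1$ images span $\mathbb Re_1$ while the level-$2$ images span $\mathbb Re_2$), yet the level-$2$ family $\{T_1S,T_2S\}$ fails $C(1)$ at $(v,w)=(e_1,e_1)$ and is not $(c,1)$-full for any $c>0$ (test against $U=\mathrm{diag}(1,\varepsilon)$, $V=\mathrm{diag}(1,\delta)$ with $\varepsilon,\delta\downarrow 0$). The paper does not attempt this descent: it applies \eqref{basicineq} to the \emph{full} family $\mathcal F$, obtaining for each $(U,V)$ a word $\bj$ of some length $l\le N_1$ with $\Phi^s(UT_\bj^{\tilde\omega}V)\ge C(\mathcal F)\,\Phi^s(U)\Phi^s(V)$, then extends $\bj$ to a full-length word $\overline\bi_{N_1}\in\Sigma_*^{\tilde\omega}(0,1)$ and argues directly at the level of $\Phi^s$, invoking the \emph{quantitative} lower singular-value bound $\underline\sigma$ rather than mere invertibility, to arrive at \eqref{FSCor}.
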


\begin{proof}
Since the set of $(c,s)$-full families is a Borel set, the set in 
the definition of $\varrho$ is a Borel set.
Let $U,V:\mathbb R^d\to\mathbb R^d$ be non-singular linear maps. 
Suppose that
\[
\mathcal F=\{T_\bj^{\tilde{\omega}}\mid\bj=\bi_l, 
1\le l\le N_1\text{ and }\bi_{N_1}\in\Sigma_*^{\tilde\omega}(0,1)\}
\]
satisfies the condition $C(s)$. 
By the proof of \cite[Proposition 2.1]{FS} (see also
\cite[Corollary 2.2]{FS}), there exists $\bj$ such that
\begin{equation}\label{basicineq}
\Phi^s(UT_\bj^{\tilde\omega}V)\ge C(\mathcal F)\Phi^s(U)
  \Phi^s(V),
\end{equation}
where the constant $C(\mathcal F)$ is independent of $U$ and $V$. 
Observe that $C(\mathcal F)$ depends on $s$ but it is an interpolation of 
the constants 
obtained by replacing $s$ by $m$ and $m+1$, where $m$ is the integer part of 
$s$ (recall Remark~\ref{FSremark}). Let 
$\overline\bi_{N_1}\in\Sigma_*^\omega(0,1)$ be such that 
$\bj=\overline\bi_{|\bj|}$.
Writing 
$T_{\overline\bi_{N_1}}^{\tilde\omega}
=T_{\bj}^{\tilde\omega}T_{i_{|\bj|+1}}^{\tilde\omega(i_{\vert\bj\vert})}
\cdots T_{i_{N_1}}^{\tilde\omega(i_{N_1-1})}$
and applying \eqref{basicineq}, gives
\[
\Phi^s(UT_{\overline\bi_{N_1}}^{\tilde\omega}V)
\ge\underline\sigma^{N_1-\vert\bj\vert}\Phi^s(UT_{\bj}V)
\ge C(\mathcal F) \underline\sigma^{N_1}\Phi^s(U)\Phi^s(V).
\]
This implies that
\begin{equation}\label{FSCor}
\sum_{\bi_{N_1}\in\Sigma_*^{\tilde\omega}(0,1)}\Phi^s(UT_{\bi_{N_1}}^{\tilde\omega}V)
  \ge C(\mathcal F)\underline\sigma^{N_1}\Phi^s(U)
   \Phi^s(V)
\end{equation}
for all linear mappings $U,V:\mathbb R^d\to\mathbb R^d$. From \eqref{probcondcs}
we conclude that there exists $c>0$ such that
\[
P\{\tilde{\omega}\in\widetilde\Omega\mid C(\mathcal F)
  \underline\sigma^{N_1}>c\}>0,
\]
giving the claim.
\end{proof}

In the following lemma we denote by $\# A$ the number of elements in 
a set $A$.

\begin{lemma}\label{numbersatcondition}
Assume that the condition \eqref{probcondcs} is satisfied and let 
$\varrho$ and $c$ be as in Lemma~\ref{fullok}. Define for all
$n,m\in\mathbb{N}$ 
\[
E^{\tilde\omega}(n,n+m)=\#\{n<j\le n+m\mid\{T_{\bi_{N_1}}^{\Xi^{j-1}
  (\tilde\omega)}\}\text{ is }(c,s)\text{-full }\}
\]
and suppose that $P$ is $\Xi$-invariant and ergodic. Then for $P$-almost all
$\tilde\omega\in\widetilde\Omega$ the following is true: 
for all $\varepsilon>0$ there exists
$n_1(\tilde\omega,\varepsilon)>0$ such that for all
$n>n_1(\tilde\omega,\varepsilon)$ we have
\[
E^{\tilde\omega}(n,n+\lceil\varepsilon n\rceil)\ge 1,
\]
where $\lceil x\rceil$ is the smallest integer $m$ with $x\le m$.
\end{lemma}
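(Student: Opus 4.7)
The proof is a direct application of Birkhoff's ergodic theorem to the indicator function of the ``good'' set provided by Lemma~\ref{fullok}. Let me sketch the steps.

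First, I would define the set
\[
B=\{\tilde\omega\in\widetilde\Omega\mid\{T_{\bi_{N_1}}^{\tilde\omega}\}_{\bi_{N_1}\in\Sigma_*^{\tilde\omega}(0,1)}\text{ is }(c,s)\text{-full}\},
\]
which is a Borel set with $P(B)=\varrho>0$ by Lemma~\ref{fullok}. The key observation is that $E^{\tilde\omega}(n,n+m)$ can be rewritten as a Birkhoff sum: by definition,
\[
E^{\tilde\omega}(n,n+m)=\sum_{j=n+1}^{n+m}\mathbf{1}_B(\Xi^{j-1}\tilde\omega)=\sum_{k=n}^{n+m-1}\mathbf{1}_B(\Xi^k\tilde\omega).
\]

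Next, since $P$ is $\Xi$-invariant and ergodic, Birkhoff's ergodic theorem applied to $\mathbf{1}_B\in L^1(P)$ yields a $P$-full set $\widetilde\Omega_0\subset\widetilde\Omega$ on which
\[
\lim_{n\to\infty}\frac{1}{n}\sum_{k=0}^{n-1}\mathbf{1}_B(\Xi^k\tilde\omega)=P(B)=\varrho.
\]
Writing $S_n(\tilde\omega)=\sum_{k=0}^{n-1}\mathbf{1}_B(\Xi^k\tilde\omega)$, we have $E^{\tilde\omega}(n,n+m)=S_{n+m}(\tilde\omega)-S_n(\tilde\omega)$.

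Now fix $\tilde\omega\in\widetilde\Omega_0$ and $\varepsilon>0$, and set $m_n=\lceil\varepsilon n\rceil$. Using $S_n/n\to\varrho$ and $S_{n+m_n}/(n+m_n)\to\varrho$, I would write
\[
E^{\tilde\omega}(n,n+m_n)=(n+m_n)\cdot\frac{S_{n+m_n}(\tilde\omega)}{n+m_n}-n\cdot\frac{S_n(\tilde\omega)}{n}=\varrho m_n+o(n),
\]
so $E^{\tilde\omega}(n,n+\lceil\varepsilon n\rceil)/n\to\varrho\varepsilon>0$. Consequently there exists $n_1(\tilde\omega,\varepsilon)$ such that $E^{\tilde\omega}(n,n+\lceil\varepsilon n\rceil)\ge 1$ (in fact tends to infinity) for all $n>n_1(\tilde\omega,\varepsilon)$, which is exactly the claim. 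There is no real obstacle here: the only small point to verify is that $B$ is Borel measurable, which follows from the fact noted in Remark~\ref{oldthm}(b) together with the fact that the $(c,s)$-fullness condition is a Borel condition on the finite tuple of linear maps $(T_{\bi_{N_1}}^{\tilde\omega})_{\bi_{N_1}}$, since the maps $\tilde\omega\mapsto T_{\bi_{N_1}}^{\tilde\omega}$ and $\tilde\omega\mapsto N_1(\tilde\omega)$ are Borel.
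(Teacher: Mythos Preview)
Your proof is correct and follows essentially the same route as the paper: both apply Birkhoff's ergodic theorem to the indicator of the $(c,s)$-full set, write $E^{\tilde\omega}(n,n+m)$ as a difference of two Birkhoff sums, and use $S_n/n\to\varrho$ to conclude that $E^{\tilde\omega}(n,n+\lceil\varepsilon n\rceil)$ eventually exceeds $1$. The only cosmetic difference is that the paper carries an explicit error parameter $\tilde\varepsilon$ through the inequalities, whereas you use the cleaner $o(n)$ formulation; the underlying argument is identical.
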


\begin{proof}
Let $\chi$ be the characteristic function of the set
$\{\tilde\omega\in\widetilde\Omega\mid\{T_{\bi_{N_1}}^{\tilde\omega}\}\text{ is }
  (c,s)\text{-full}\,\}$.
Since
\[
E^{\tilde\omega}(0,n)=\sum_{j=0}^{n-1}\chi(\Xi^j(\tilde{\omega})),
\]
we obtain from the Birkhoff ergodic theorem that for $P$-almost all
$\tilde\omega\in\widetilde\Omega$ 
\begin{equation}\label{expectedfull}
\lim_{n\to\infty}\frac{E^{\tilde\omega}(0,n)}{n}=\int_{\tilde\Omega}
  \chi(\tilde\omega)dP(\tilde\omega)=\varrho.
\end{equation}

Fix $\tilde\omega\in\widetilde\Omega$ satisfying \eqref{expectedfull} and let
$\varepsilon>0$. Defining
$0<\tilde{\varepsilon}=\frac{\varrho\varepsilon n-1}{(\varepsilon+2)n}<\varrho$
for sufficiently large $n$,
there exists $n_1(\tilde{\omega},\varepsilon)>0$ such that for all
$n>n_1(\tilde{\omega},\varepsilon)$ and for all $m\ge 0$ we have
\[
(\varrho-\tilde\varepsilon)(n+m)<E^{\tilde\omega}(0,n+m)<(\varrho
   +\tilde\varepsilon)(n+m),
\]
and therefore,
\[
E^{\tilde\omega}(n,n+m)=E^{\tilde\omega}(0,n+m)-E^{\tilde\omega}(0,n)
  >(\varrho-\tilde\varepsilon)m-2\tilde\varepsilon n.
\]
Finally, taking $m\ge \varepsilon n$, gives
$(\varrho-\tilde\varepsilon)m-2\tilde\varepsilon n\ge 1$, which implies that
$E^{\tilde\omega}(n,n+m)\ge 1$. In particular,
$E^{\tilde\omega}(n,n+\lceil\varepsilon n\rceil)\ge 1$.
\end{proof}

\begin{lemma}\label{numbernecks}
Under the assumptions of Theorem~\ref{pexists}, we have for $P$-almost all 
$\tilde\omega\in\widetilde\Omega$ that
\[
\lim_{n\to\infty}\frac{N_{n+\lceil\varepsilon n\rceil}(\tilde\omega)
  -N_{n-1}(\tilde\omega)}{N_n(\tilde\omega)}=\varepsilon
\]
for all $\varepsilon>0$.
\end{lemma}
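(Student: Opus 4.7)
The plan is to reduce the statement to a direct application of Birkhoff's ergodic theorem. The key observation is that $N_1$ generates the entire neck sequence in a telescoping fashion along orbits of $\Xi$. Indeed, unwinding the definition $\Xi(\omega,N)=(\hat\omega,\hat N)$ with $\hat N_m = N_{m+1}-N_1$, one checks by induction on $k$ that
\[
N_1(\Xi^k\tilde\omega) = N_{k+1}(\tilde\omega)-N_k(\tilde\omega)
\]
for all $k\ge 0$, with the convention $N_0=0$. Summing telescopically yields $N_n(\tilde\omega)=\sum_{k=0}^{n-1} N_1(\Xi^k\tilde\omega)$.

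Since $P$ is $\Xi$-invariant and ergodic with $N_1\in L^1(P)$, Birkhoff's ergodic theorem delivers a single set $\widetilde\Omega_0\subset\widetilde\Omega$ of full $P$-measure on which
\[
\lim_{n\to\infty}\frac{N_n(\tilde\omega)}{n} = \mu := \int_{\widetilde\Omega}N_1\,dP.
\]
Crucially, the exceptional set does not depend on $\varepsilon$, so the limit holds simultaneously for all $\varepsilon>0$ once we fix $\tilde\omega\in\widetilde\Omega_0$. Note also that $\mu\ge 1>0$, since $N_1\ge 1$ everywhere, which guarantees that the denominator in the statement is eventually nonzero and the ratio is well defined.

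To finish, fix $\tilde\omega\in\widetilde\Omega_0$ and $\varepsilon>0$, and rewrite
\[
\frac{N_{n+\lceil\varepsilon n\rceil}(\tilde\omega)-N_{n-1}(\tilde\omega)}{N_n(\tilde\omega)}
= \frac{n+\lceil\varepsilon n\rceil}{n}\cdot\frac{N_{n+\lceil\varepsilon n\rceil}(\tilde\omega)/(n+\lceil\varepsilon n\rceil)}{N_n(\tilde\omega)/n}
-\frac{n-1}{n}\cdot\frac{N_{n-1}(\tilde\omega)/(n-1)}{N_n(\tilde\omega)/n}.
\]
As $n\to\infty$, the three averages $N_{n+\lceil\varepsilon n\rceil}/(n+\lceil\varepsilon n\rceil)$, $N_{n-1}/(n-1)$ and $N_n/n$ all converge to $\mu>0$, while the integer prefactors tend to $1+\varepsilon$ and $1$, respectively. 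Hence the expression converges to $(1+\varepsilon)-1=\varepsilon$, as claimed.

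There is no real obstacle beyond verifying the telescoping identity from the definition of $\Xi$; the mildly subtle point is that we want the conclusion to hold simultaneously for all $\varepsilon>0$ on a single full-measure set, which is automatic once we notice that $\mu$ is a deterministic constant and a single almost sure convergence $N_n/n\to\mu$ suffices.
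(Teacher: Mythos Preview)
Your proof is correct and follows essentially the same approach as the paper: both arguments use the identity $N_n(\tilde\omega)=\sum_{j=0}^{n-1}N_1(\Xi^j\tilde\omega)$, apply Birkhoff's ergodic theorem to obtain $N_n/n\to\int N_1\,dP$, and then compute the limit by splitting the ratio into $N_{n+\lceil\varepsilon n\rceil}/N_n\to 1+\varepsilon$ and $N_{n-1}/N_n\to 1$. Your write-up is slightly more explicit in justifying the telescoping identity from the definition of $\Xi$, in noting that the limit $\mu\ge 1$ is strictly positive, and in observing that a single full-measure set works for all $\varepsilon>0$, but these are refinements of the same argument rather than a different route.
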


\begin{proof}
Since $N_n({\tilde\omega})=\sum_{j=0}^{n-1}N_1(\Xi^j(\tilde\omega))$,
the Birkhoff ergodic theorem implies that for $P$-almost all
$\tilde\omega\in\widetilde\Omega$ 
\[
\lim_{n\to\infty}\frac{N_n({\tilde\omega})}{n}=\int_{\tilde\Omega}
  N_1(\tilde\omega)dP(\tilde\omega)=b<\infty.
\]
Now for any typical $\tilde\omega$ we have
\[
\lim_{n\to\infty}\frac{N_{n+\lceil\varepsilon n\rceil}(\tilde\omega)}
   {N_n(\tilde\omega)}
  =\lim_{n\to\infty}\frac{N_{n+\lceil\varepsilon n\rceil}(\tilde\omega)}
  {n+\lceil\varepsilon n\rceil}\cdot\frac{n+\lceil\varepsilon n\rceil}{n}\cdot
   \frac{n}{N_n(\tilde\omega)}=b(1+\varepsilon)\frac 1b=1+\varepsilon,
\]
and similarly we see that
$\lim_{n\to\infty}\frac{N_{n-1}(\tilde\omega)}{N_n(\tilde\omega)}=1$.
Therefore,
\[
\lim_{n\to\infty}\frac{N_{n+\lceil\varepsilon n\rceil}(\tilde\omega)
  -N_{n-1}(\tilde\omega)}{N_n(\tilde\omega)}=\varepsilon.
\]
\end{proof}

Now we are ready to prove Theorem~\ref{maintheorem}.

\begin{proof}[Proof of Theorem~\ref{maintheorem}]
In \cite[(5.20)]{JJKKSS} it is proven that under the assumptions of
Theorem~\ref{pexists} we have $\udimb(A_\ba^{\tilde\omega})\le\min\{s_0,d\}$
for $P$-almost all $\tilde\omega\in\widetilde\Omega$. Here $\udimb$
is the upper box counting dimension. Note that the 
assumption $d=2$ is not needed in the proof of \cite[(5.20)]{JJKKSS}.
Since always $\dimH\le\dimp\le\udimb$ (see for example 
\cite[(3.17) and (3.29)]{F}), it is sufficient to verify that 
\begin{equation}\label{almostgoal}
\dimH(A_\ba^{\tilde\omega})\ge\min\{s_0,d\}
\end{equation}
for $P$-almost all $\tilde\omega\in\widetilde\Omega$. Let $s<\min\{s_0,d\}$. 
In the proof of \cite[Theorem 3.2]{JJKKSS} it is shown that \eqref{almostgoal} 
follows provided 
that for $P$-almost all $\tilde\omega\in\widetilde\Omega$ there exists a
probability measure $\mu^{\tilde\omega}$ on $\Sigma^{\tilde\omega}$ and a constant
$D(\tilde{\omega})>0$ such that
\begin{equation}\label{eq66}
\mu^{\tilde\omega}([\bi_l])\le D(\tilde\omega)\Phi^s(T_{\bi_l}^{\tilde\omega})
\end{equation}
for all $\bi\in\Sigma^{\tilde\omega}$ and $l\in\mathbb N$.

For the purpose of verifying \eqref{eq66}, we define for all 
$\tilde\omega\in\widetilde\Omega$ and $m\in\mathbb N$
\begin{equation}\label{muomega}
\mu_m^{\tilde\omega} = \frac{\sum_{\bi_{N_m}\in\Sigma_*^{\tilde\omega}(0,m)}
   \Phi^s(T_{\bi_{N_m}}^{\tilde\omega})\delta_{\bi_{N_m}}}{\sum_{\bi_{N_m}
   \in\Sigma_*^{\tilde\omega}(0,m)}
   \Phi^s(T_{\bi_{N_m}}^{\tilde\omega})},
\end{equation}
where $\delta_{\bi_{N_m}}$ is the Dirac measure at some fixed point of the 
cylinder $[\bi_{N_m}]$. The choice of the cylinder point plays no role in 
what follows. Since $\Sigma^{\tilde\omega}$ is compact, the sequence
$(\mu_m^{\tilde\omega})_{m\in\mathbb N}$ has a weak*-converging
subsequence with a limit measure $\mu^{\tilde\omega}$. We proceed 
by showing that $\mu^{\tilde\omega}$ satisfies \eqref{eq66}.

By Lemma~\ref{numbernecks} the following is true for $P$-almost all 
$\tilde\omega\in\widetilde\Omega$: for all $\varepsilon>0$
there exists $n_2(\tilde\omega,\varepsilon)>0$ such
that for all $n>n_2(\tilde\omega,\varepsilon)$
\begin{equation}\label{neckbound}
N_{n+\lceil\varepsilon n\rceil}(\tilde\omega)-N_{n-1}(\tilde{\omega})
  <2\varepsilon N_n(\tilde\omega).
\end{equation}
Furthermore, it follows from the definition of the pressure that for 
$P$-almost all
$\tilde\omega\in\widetilde\Omega$ there exists for all $\varepsilon>0$ a
number $n_3(\tilde\omega,\varepsilon)>0$ such that for all
$n>n_3(\tilde\omega,\varepsilon)$ we have
\begin{equation}\label{pressurebound}
e^{(p^{\tilde\omega}(s)-\varepsilon)N_n(\tilde\omega)}<\sum_{\bi_{N_n}
   \in\Sigma_*^{\tilde\omega}(0,n)}\Phi^s(T_{\bi_{N_n}}^{\tilde\omega})
   < e^{(p^{\tilde\omega}(s)+\varepsilon)N_n(\tilde\omega)}.
\end{equation}

Let $\varepsilon>0$. Consider $\tilde\omega\in\widetilde\Omega$ satisfying
Lemma~\ref{numbersatcondition}, \eqref{neckbound} and
\eqref{pressurebound} and set
$n_0(\tilde\omega,\varepsilon)=\max\{n_1(\tilde\omega,\varepsilon),
  n_2(\tilde\omega,\varepsilon),n_3(\tilde\omega,\varepsilon)\}$.
For all $\bi_l\in\Sigma_*^{\tilde\omega}$ with
$l>N_{n_0(\tilde\omega,\varepsilon)}$, there exists
$n>n_0(\tilde\omega,\varepsilon)$ such that $N_{n-1}<l\leq N_n$. Now
Lemma~\ref{numbersatcondition} implies the existence of
$1\le k\le\lceil\varepsilon n\rceil$ such that
$\{T_{\bj_{N_1}}^{\Xi^{n+k-1}(\tilde{\omega})}\}$ is $(c,s)$-full. Let
$m$ be a natural number with $m>\varepsilon n$. In the remaining part of
the proof we use the following abbreviations 
$\sum_\bj=\sum_{\bj:\bi_l\bj\in\Sigma_*^{\tilde\omega}(0,n+k-1)}$,
$\sum_{N_1}=\sum_{\bj_{N_1}\in\Sigma_*^{\tilde\omega}(n+k-1,n+k)}$,
$\sum_{N_{m-k}}=\sum_{\bk_{N_{m-k}}\in\Sigma_*^{\tilde\omega}(n+k,n+m)}$ and
$\sum_{N_{n+k-1}}=\sum_{\bi_{N_{n+k-1}}\in\Sigma_*^{\tilde\omega}(0,n+k-1)}$, and
denote by $T_{(\bi_l)\bj}^{\tilde\omega}$ the last $|\bj|$ maps of
$T_{\bi_l\bj}^{\tilde\omega}$. Using the definition of
$\mu_{n+m}^{\tilde\omega}$, applying the submultiplicativity of $\Phi^s$ in
the numerator and utilising the $(c,s)$-fullness in the denominator, we
obtain
\begin{align*}
\mu_{n+m}^{\tilde\omega}([\bi_l])&=\frac{\sum_\bj\sum_{N_1}\sum_{N_{m-k}}
   \Phi^s(T_{\bi_l\bj}^{\tilde\omega}T_{\bj_{N_1}}^{\Xi^{n+k-1}(\tilde\omega)}
   T_{\bk_{N_{m-k}}}^{\Xi^{n+k}(\tilde\omega)})}
 {\sum_{N_{n+k-1}}\sum_{N_1}\sum_{N_{m-k}}\Phi^s(T_{\bi_{N_{n+k-1}}}^{\tilde\omega}
   T_{\bj_{N_1}}^{\Xi^{n+k-1}(\tilde\omega)}
   T_{\bk_{N_{m-k}}}^{\Xi^{n+k}(\tilde\omega)})}\\
&\le\frac{\Phi^s(T_{\bi_l}^{\tilde\omega})\sum_\bj\sum_{N_1}\sum_{N_{m-k}}
   \Phi^s(T_{(\bi_l)\bj}^{\tilde\omega})
   \Phi^s(T_{\bj_{N_1}}^{\Xi^{n+k-1}(\tilde\omega)})
   \Phi^s(T_{\bk_{N_{m-k}}}^{\Xi^{n+k}(\tilde\omega)})}
 {c\sum_{N_{n+k-1}}\sum_{N_{m-k}}\Phi^s(T_{\bi_{N_{n+k-1}}}^{\tilde\omega})
   \Phi^s(T_{\bk_{N_{m-k}}}^{\Xi^{n+k}(\tilde\omega)})}\\
&=\frac{\Phi^s(T_{\bi_l}^{\tilde\omega})\sum_\bj\sum_{N_1}
   \Phi^s(T_{(\bi_l)\bj}^{\tilde\omega})
   \Phi^s(T_{\bj_{N_1}}^{\Xi^{n+k-1}(\tilde\omega)})}
 {c\sum_{N_{n+k-1}}\Phi^s(T_{\bi_{N_{n+k-1}}}^{\tilde\omega})}.
\end{align*}
Recall that in every family there are at most $M$ maps, $\Phi^s(T_j)\le 1$ for
all $j$ and $k\le\lceil\varepsilon n\rceil$, and suppose that
$\varepsilon<p^{\tilde\omega}(s)$. Applying \eqref{neckbound} in the numerator
and \eqref{pressurebound} in the denominator, we obtain for all
$l>N_{n_0(\tilde\omega,\varepsilon)}$ that
\[
\mu_{n+m}^{\tilde\omega}([\bi_l])\le\frac{\Phi^s(T_{\bi_l}^{\tilde\omega})
   M^{N_n(\tilde\omega)-N_{n-1}(\tilde\omega)+N_{n+\lceil\varepsilon n\rceil}(
   \tilde\omega)-N_n(\tilde\omega)}}
  {c e^{(p^{\tilde\omega}(s)-\varepsilon)N_{n+k-1}(\tilde\omega)}}
 \le\frac{\Phi^s(T_{\bi_l}^{\tilde\omega})M^{2\varepsilon N_n(\tilde\omega)}}
  {c e^{(p^{\tilde\omega}(s)-\varepsilon)N_n(\tilde\omega)}}.
\]
Taking $\varepsilon$ so small that
$M^{2\varepsilon}<e^{p^{\tilde\omega}(s)-\varepsilon}$, we set
\[
D(\tilde\omega)=\max\Bigl\{c^{-1},\max_{l\le N_{n_0(\tilde\omega,\varepsilon)}}
  \Bigl\{\frac{\mu^{\tilde\omega}[\bi_l]}{\Phi^s(T_{\bi_l}^{\tilde\omega})}
  \Bigr\}\Bigr\}.
\]
Then for all $l>0$ we have
\[
\mu_{n+m}^{\tilde\omega}([\bi_l])\le D(\tilde\omega)
   \Phi^s(T_{\bi_l}^{\tilde\omega}).
\]
Letting $m$ tend to infinity and recalling that cylinders are open, we
obtain \eqref{eq66} from the Portmanteau theorem \cite[Theorem 17.20]{K}.
\end{proof}


\begin{thebibliography}{50}

\bibitem{BHS2005} M. Barnsley, J. E. Hutchinson and \"O. Stenflo,
  \emph{A fractal valued random iteration algorithm and fractal hierarchy},
  Fractals \textbf{13} (2005), 111--146.

\bibitem{BHS2008} M. Barnsley, J. E. Hutchinson and \"O. Stenflo,
  \emph{V-variable fractals: Fractals with partial self similarity},
  Adv. Math. \textbf{218} (2008), 2051--2088.

\bibitem{BHS12} M. Barnsley, J. E. Hutchinson and \"O. Stenflo,
  \emph{V-variable fractals: Dimension results}, Forum Math. \textbf{24}
  (2012), 445--470.

\bibitem{BF} J. Barral and D.-J. Feng, \emph{Multifractal formalism for
  almost all self-affine measures}, Comm. Math. Phys. \textbf{318} (2013),
  473--504.

\bibitem{F88} K. J. Falconer, \emph{The Hausdorff dimension of self-affine
  fractals}, Math. Proc. Cambridge Philos. Soc. \textbf{103} (1988), 339--350.

\bibitem{F95} K. J. Falconer, \emph{Sub-self-similar sets},
 Trans. Amer. Math. Soc. \textbf{347} (1995), 3121--3129.

\bibitem{F} K. J. Falconer, \emph{Fractal Geometry}, Second edition,
  John Wiley \& Sons, Chichester, 2003.

\bibitem{FM07} K. J. Falconer and J. Miao, \emph{Dimensions of self-affine
  fractals and multifractals generated by upper triangular matrices},
  Fractals \textbf{15} (2007), 289--299.

\bibitem{FM09} K. J. Falconer and J. Miao, \emph{Random subsets of
  self-affine fractals}, Mathematika \textbf{56} (2010), 61--76.

\bibitem{FS} K. J. Falconer and A. Sloan, \emph{Continuity of subadditive
  pressure for self-affine sets}, Real Anal. Exchange \textbf{34} (2009),
  413--427.

\bibitem{Fe} D.-J. Feng, \emph{Lyapunov exponents for products of matrices
  and multifractal analysis. Part II: General matrices}, Israel J. Math.
  \textbf{170} (2009), 355--394.

\bibitem{FeSh}  D.-J. Feng and P. Shmerkin, \emph{Non-conformal repellers
and the continuity of pressure for matrix cocycles}, to appear in
Geom. Funct. Anal.

\bibitem{JJKKSS} E. J\"arvenp\"a\"a, M. J\"arvenp\"a\"a,
  A.\ K\"aenm\"aki, H. Koivusalo, \"O. Stenflo and V. Suomala, \emph{Dimensions
  of random affine code tree fractals}, Ergodic Theory Dynam. Systems
  \textbf{34} (2014), 854--875.

\bibitem{JPS} T. Jordan, M. Pollicott and K. Simon, \emph{Hausdorff
  dimension for randomly perturbed self affine attractors}, Comm. Math. Phys.
  \textbf{270} (2007), 519--544.

\bibitem{KS} A.\ K\"aenm\"aki and P. Shmerkin, \emph{Overlapping
  self-affine sets of Kakeya type}, Ergodic Theory Dynam. Systems \textbf{29}
  (2009), 941--965.

\bibitem{K} A. Kechris, \emph{Classical Descriptive Set Theory},
  Springer-Verlag, New York, 1995.

\bibitem{MU03} R.\ D.\ Mauldin and M.\ Urba\'nski, \emph{Graph Directed
  Markov Systems--Geometry and Dynamics of Limit Sets}, Cambridge Tracts
  in Mathematics \textbf{148}, Cambridge University Press, Cambridge, 2003.

\bibitem{PU} F. Przytycki and M. Urba\'nski, \emph{On the Hausdorff
  dimension of some fractal sets}, Studia Math. \textbf{93} (1989), 155--186.

\bibitem{RU} M. Roy and M. Urba\'nski, \emph{Random graph directed Markov
  systems}, Discrete Contin. Dyn. Syst. \textbf{30} (2011), 261--298.

\bibitem{S} B. Solomyak, \emph{Measure and dimensions for some fractal
  families}, Math. Proc. Cambridge Philos. Soc. \textbf{124} (1998), 531--546.

\end{thebibliography}
\end{document}